\newtheorem{thm}{Theorem}[section]
\newtheorem{cor}{Corollary}[section]
\newtheorem{prop}{Proposition}[section]
\newtheorem{rem}{Remark}[section]
\theoremstyle{definition}
\newcommand{\bH}{{\mathbb{H}}}
\newcommand{\bfi}{{\mathbf{i}}}
\newcommand{\bfj}{{\mathbf{j}}}
\newcommand{\bfk}{{\mathbf{k}}}
\DeclareMathOperator{\re}{Re}
\DeclareMathOperator{\im}{Im }
\begin{document}
\numberwithin{equation}{section}

 \title[On Cauchy-Szeg\"o kernel for quaternionic Siegel upper half space ]
 {On Cauchy-Szeg\"o kernel  for quaternionic Siegel upper half space  }
\author {Der-Chen Chang, Irina Markina  and Wei Wang
 }

\keywords{The quaternionic  regular  functions, Siegel upper half space, Cauchy - Szeg\"o kernel, projection operator, Hardy space}

\subjclass[2000]{43A15, 42B35, 42B30}

\address{Department of Mathematics and Department of Computer Science,
Georgetown University, Washington D.C. 20057, USA
\newline
Department of Mathematics, Fu Jen Catholic University, Taipei 242, Taiwan, ROC}
\email{chang@georgetown.edu}

\address{Department of Mathematics, University of Bergen, NO-5008 Bergen, Norway}
\email{irina.markina@uib.no}

\address{Department of Mathematics, Zhejiang University, Zhejiang 310028, PR China}
\email{wwang@zju.edu.cn.}

\thanks{The first author is partially supported by an NSF grant DMS-1203845 and Hong Kong
RGC competitive earmarked research grant $\#$601410. The second author gratefully acknowledge partial support by the grants NFR-204726/V30 and NFR-213440/BG, Norwegian Research Council. The third author is partially supported by National Nature Science Foundation in China (No. 11171298).
}

\begin{abstract} The work is dedicated to the construction of the Cauchy-Szeg\"o kernel for the Cauchy-Szeg\"o projection integral operator from the space of $L^2$-integrable functions defined on the boundary of the quaternionic Siegel upper half space  to   the space of boundary values of the quaternionic  regular functions of the Hardy space over the quaternionic Siegel upper half space.
\end{abstract}

\maketitle

\section{\bf Introduction}

It is a well known fact that the unit disc (or 2 dimensional ball) is bi-holomorphically equivalent to the upper half space of the complex plane by Cayley transform. The abelian group $(\mathbb R,+)$ acts as translations parallel to the boundary in the upper half plane and can be extended to the boundary. Since the action of the group $(\mathbb R,+)$ is transitive on the boundary, the boundary can be identified with the group by its action on the origin. Passing to the two dimensional complex plane one obtains that 4 dimensional real open ball is bi-holomorphically equivalent to the Siegel upper half space by 2 dimensional Cayley transform.
The abelian group $(\mathbb R,+)$ is replaced by non-abelian Heisenberg group, that is a subgroup of the group of automorphisms of the
Siegel upper half space and also can be extended to the transitive action on the boundary. It allows us to identify the points on the boundary of the
Siegel upper half space with the Heisenberg group. This construction can be generalized to the $n$-dimensional complex space.
Moreover, if we change 2 dimensional complex space by 2 dimensional quaternionic space, then the corresponding Cayley transform maps $8$ dimensional real open ball to the quaternionic Siegel upper half space and it extends to the boundary. The analogue of the Heisenberg group is the, so called,
quaternionic Heisenberg group and it forms a subgroup of the group of automorphisms of the quaternionic Siegel upper half space.
Extending the action of the quaternionic Heisenberg group to the boundary of the Siegel upper half space and taking into account its transitive action,
one realizes the boundary as a group. As in the case of complex variables, the latter situation can be generalized to the multidimensional quaternionic space.

The classical Hardy space $H^2(\mathbb R^2_+)$ consists of holomorphic functions on the upper half plane $\mathbb R^2_+$ such that
$$
\sup_{y>0}\int_{-\infty}^{+\infty} \big|f(x+iy)\big|^2 dy\,<\, +\infty.
$$
Standard argument shows that such functions have boundary values in $L^2(\mathbb R)$. (See {\it e.g.} Chapter 3 in \cite{Stein-Weiss} and Chapter 2 in \cite{St1}). The set of all boundary values forms a closed subspace of
$L^2(\mathbb R)$ and the Cauchy-Szeg\"o integral is the projection operator from $L^2(\mathbb R)$ to this closed subspace. The Cauchy-Szeg\"o integral is
written as a convolution with the Cauchy-Szeg\"o kernel, that in the same time is the reproducing kernel for the functions from the
Hardy space $H^2(\mathbb R^2_+)$. Following this line, in the books~\cite{St1} and ~\cite{St} the construction of the Cauchy-Szeg\"o kernel was realized as a kernel
of the projection operator from $L^2(\partial\mathcal U_n)$ space of functions on the boundary $\partial\mathcal U_n$ of the Siegel upper half space to the space of
boundary values of the functions from Hardy space $H^2(\mathcal U_n)$ over the Siegel upper half space.
The projection operator is given as a convolution with respect to the Heisenberg group product and has the reproducing property. In the present paper, we present analogue of this construction for the quaternionic  regular  functions, the quaternionic Siegel upper half space and the quaternionic Heisenberg group. We compute the Cauchy-Szeg\"o kernel explicitly for any dimension $n$. The construction is much more complicated than in the case $\mathbb C^2$.

We denote by $\mathbb H$ the space of quaternionic numbers $q=x_1+x_2 \mathbf{i}+x_3 \mathbf{j}+x_4\mathbf{k}$.
We write $\re\bH$ for one dimensional subspace of $\bH$ spanned by $1$  and $\im\bH$ for 3 dimensional subspace of $\bH$ spanned by $\{\bfi,\bfj,\bfk\}$.
The $n$-dimensional quaternionic space $\bH^n$ is the collection of $n$-tuples $(q_1,\ldots,q_n)$, $q_l\in\bH$.
For $l$-th coordinate of a point $q=(q_1,\cdots,q_{n   })\in \mathbb H^{n }$ we write
\begin{equation}\label{eq:ql}
 q_{l  }=x_{4 l-3}+x_{4 l-2}\mathbf{i}+x_{4 l-1}\mathbf{j}+x_{4 l  }\mathbf{k},\qquad l=1,\ldots,n.
 \end{equation}For a domain $D\subset \bH^n$, a $C^1$-smooth function $f=f_1+
 \mathbf{i}f_2
 + \mathbf{j}f_3+ \mathbf{k}f_4\colon  D\rightarrow \mathbb H$ is called {\it $($left $)$ regular on $D$} if
it satisfies the Cauchy-Fueter equations
\begin{equation}\label{eq:CF}
     \overline{\partial}_{ q_{l  }}   f(q)=0,\qquad l=1,\ldots,n ,\quad q\in D,
\end{equation}
where
\begin{equation}\label{eq:Cauchy-Fueter}
   \overline\partial_{{q}_{l  }}=\partial_{x_{4 l-3}}+
 \mathbf{i}\partial_{x_{4 l-  2}}
 + \mathbf{j}\partial_{x_{4 l-1 }}+ \mathbf{k}\partial_{x_{4 l }}.
\end{equation}
Recently, people are interested in developing a theory for the   regular  functions of several quaternionic variables, as the counterpart of the theory of several complex variables for holomorphic functions (see~ \cite{adams2}, ~ \cite{ALPS},  ~\cite{AL},~ \cite{bures}, ~\cite{colombo}, ~\cite{colombo2006}, ~\cite{Wa08}, ~\cite{Wang10}, ~\cite{Wang11} and references therein).

The {\it quaternionic Siegel upper half space} is
\begin{equation}\begin{split} \mathcal{U}_n:=\left\{q=(q_1,\ldots,q_n)=(q_1,q')\in \mathbb{H}^{n }\vert\ \
\re q_{1  }> |q'|^2\right\},
\end{split}\end{equation}
where we denoted $q'=(q_2,\ldots,q_n)\in \mathbb{H}^{n-1 }$. Its boundary $\partial\mathcal{U}_n$ is a quadratic hypersurface defined by equation
\begin{equation}
\re q_{1  }= |q'|^2 .
 \end{equation}

Notice that the quaternionic space $\bH^n$ is isomorphic to $\mathbb R^{4n}$ as a vector space and the pure imaginary quaternions $\im\bH$ are isomorphic to $\mathbb R^3$. The {\it quaternion Heisenberg group} $qH^{n-1}$ is the space $\mathbb R^{4n-1}=\mathbb R^3\times \mathbb R^{4(n-1)}$, that is isomorphic to $\im\bH\times\bH^{n-1}$, furnished with the non-commutative product
\begin{equation} \label{eq:multiple-H}
p\cdot q=(w,p')\cdot (v,q')=\left(w+v+2\im \langle {p}',q'\rangle,p'+q'
 \right),
\end{equation}
where $p=(w,p'), q=(v,q')\in\im\bH\times\bH^{n-1}$, and $\langle\cdot,\cdot\rangle$ is the inner product defined in (\ref{eq:inner-product}) on $\bH^{n-1}$.

The {\it projection}
\begin{equation}\label{eq:proj}\begin{split}
\pi\colon \partial\mathcal{U}_n&\longrightarrow \im\mathbb{H} \times\mathbb{H}^{n-1},\\
  (|q'|^2+ x_2\mathbf{i}+x_3\mathbf{j}+x_4\mathbf{k},q')  &\longmapsto (x_2\mathbf{i}+x_3\mathbf{j}+x_4\mathbf{k},q').
\end{split}\end{equation}
identifies the boundary of the quaternionic Siegel upper half space $\partial\mathcal U_n=\{(q_1,q')\in \mathcal U_n\vert\ \ \re q_1=|q'|^2\}$ with the quaternionic Heisenberg group $qH^{n-1}$. Let $d\beta (\cdot)$ be   the Lebesgue measure on $\partial \mathcal U_n$ obtained by pulling back by the  projection $\pi$~\eqref{eq:proj} the Haar measure on the group $qH^{n-1}$.

For any function $F\colon \mathcal{U}_n\to\mathbb H$, we write $ F_\varepsilon$  for its "vertical translate". We mean that the
vertical direction is given by the positive direction of $\re q_1$:
 $ F_\varepsilon(q) = F(q + \varepsilon \mathbf{e})$, where $\mathbf{e} = (1,0, 0,\ldots, 0)$.
If  $\varepsilon> 0$, then  $ F_\varepsilon$  is defined in a neighborhood of $\partial\mathcal{U}_n$. In particular, $ F_\varepsilon$  is defined on  $\partial\mathcal{U}_n$.
The {\it  Hardy space}
  $H^2(\mathcal{U}_n )$ consists of all regular  functions $ F$  on $\mathcal{U}_n$, for which
 \begin{equation}\label{eq:def-H2}
     \sup_{\varepsilon>0}\int_{\partial\mathcal{U}_n }|F_\varepsilon(q)|^2 d\beta (q) <\infty.
 \end{equation}
The norm $\|F\|_{H^2(\mathcal{U}_n )}$ of $F $ is then the square root of the left-hand side of (\ref{eq:def-H2}).
A function $F\in H^2(\mathcal{U}_n )$ has boundary value $F^b$ that belongs to $L^2(\partial\mathcal U)$ by Theorem
\ref{thm:boundary_value}.

Now we can state the main result of the paper.
\begin{thm}\label{thm:CS}
The Cauchy-Szeg\"o kernel is given by
\begin{equation}\label{eq:CS}
S(q, p) =s\left(q_1+\overline{p}_1  -2\sum_{k=2}^{n} \bar p_k'q'_k \right),
     \end{equation}
 for $p=(p_1,p')=(p_1,\ldots, p_n)\in \mathcal{U}_n $, $q=(q_1,q')=(q_1,\ldots, q_n) \in \mathcal{U}_n $, where
     \begin{equation}\label{eq:s}
         s( \sigma )= c_n  \frac {\partial^{2n}}{\partial x_1^{2n}}\frac {\overline{\sigma} }{|\sigma|^4},\qquad \sigma=x_1+x_2 \mathbf{i}+x_3 \mathbf{j}+x_4\mathbf{k} \in \mathbb H.
     \end{equation}
 Here
\begin{equation}
\label{eq:constant}
c_n=\frac{1}{2^{2n+5}\pi^{2n+1}\big((2n)!\big)^2K(n)}\frac{4n-1}{(n+2)(2n+3)},
\end{equation}
where the constant
\begin{align}\label{K}
K(n)& =\sum_{k=0}^{2n}\alpha_k\sum_{l=0}^{k}C^{2l}_{k}\sum_{m=0}^{l}(-1)^{k+m}C^{m}_{l}\nonumber
\\
&\sum_{s=0}^{k-2m}\frac{C_{k-2m}^s}{2^{k-2m-s+1}}\frac{(-1)^{s}\big(2(k-2m-s+1)\big)!}{(k-2m-s+1)!(4n+5+k-2m-s)!}
\end{align}
depends only on the dimension $n$, and $\alpha_k=\frac{(2n+1-k)(2n+2-k)(4n+3+k)}{6}$.

The Cauchy-Szeg\"o kernel satisfies the reproducing property in the following sense
 \begin{equation}\label{eq:Szego}
    F(q) =
\int_{\partial\mathcal{U}_n}
S(q, Q)F^b(Q) d\beta (Q),  \qquad q\in \mathcal{U}_n,
 \end{equation}
whenever $F \in   H^2(\mathcal{U}_n)$ and $F^b$ its boundary value on $\partial U_n$.
\end{thm}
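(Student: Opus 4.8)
The plan is to realize the Cauchy-Szeg\"o projection as the orthogonal projection of $L^2(\partial\mathcal{U}_n)$ onto the closed subspace $\mathcal{H}=\{F^b : F\in H^2(\mathcal{U}_n)\}$ of boundary values, and then to compute its kernel by combining invariance under the quaternionic Heisenberg group with the Plancherel theory for $qH^{n-1}$. By Theorem \ref{thm:boundary_value} the trace map $F\mapsto F^b$ sends $H^2(\mathcal{U}_n)$ isometrically into $L^2(\partial\mathcal{U}_n)$, and it is injective as part of the Hardy space theory, so $\mathcal{H}$ is a genuine closed subspace and the orthogonal projection $\mathcal{C}\colon L^2(\partial\mathcal{U}_n)\to\mathcal{H}$ is well defined. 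The group $qH^{n-1}$ acts on $\mathcal{U}_n$ by automorphisms that preserve regularity and the measure $d\beta$; this action extends to a unitary right action on $L^2(\partial\mathcal{U}_n)$ that leaves $\mathcal{H}$ invariant, so $\mathcal{C}$ commutes with these translations and is therefore given by right convolution on $qH^{n-1}$ by a tempered-distributional kernel.

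Invariance then forces the interior reproducing kernel to be a function of the single quaternionic combination $\sigma = q_1+\bar p_1 - 2\sum_{k=2}^{n}\bar p_k' q_k'$, which is exactly the Siegel-invariant analogue of $z-\bar w$ in the classical upper half plane; this reduces the whole problem to determining one function $s(\sigma)$ of a single quaternionic variable. To pin down $s$, observe that $S(\cdot,p)$ must itself lie in $H^2(\mathcal{U}_n)$ and hence satisfy the Cauchy-Fueter equations \eqref{eq:CF} in $q$. A chain-rule computation shows that the composite $s(\sigma)$, with $\sigma$ as above, is left-regular in $q$ precisely when $s$ is left-regular in the single variable $\sigma$ away from the origin; since the Cauchy-Fueter fundamental solution on $\mathbb{H}\cong\mathbb{R}^4$ is a multiple of $\bar\sigma/|\sigma|^4$, this explains the appearance of that factor, while the power $2n$ of $\partial_{x_1}$ is dictated by the homogeneity needed for $d\beta$-square-integrability and for the boundary trace to have the correct order.

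The most efficient way to make this rigorous and, above all, to extract the normalizing constant is the group Fourier transform. The irreducible unitary representations of $qH^{n-1}$ are parametrized (Stone--von Neumann) by the nonzero central character $\lambda\in\im\mathbb{H}\cong\mathbb{R}^3\setminus\{0\}$, and on each such representation the Cauchy-Fueter system becomes a finite family of creation/annihilation operators whose lowest-weight subspace is exactly the image of the Hardy space. The symbol of $\mathcal{C}$ is the projection onto that subspace, and inverting the Plancherel formula recovers $s$ in the stated form \eqref{eq:s}; the precise constant $c_n$ in \eqref{eq:constant} and the combinatorial factor $K(n)$ in \eqref{K} emerge from evaluating the resulting radial integral over $\lambda$, a Laguerre/Bessel-type computation. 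I expect this third step to be the main obstacle: the Cauchy-Fueter operator is vector-valued with four real components, so diagonalizing it on each representation and carrying out the $\lambda$-integration is far more involved than the scalar holomorphic case treated in \cite{St}, and it is precisely this computation that generates the intricate sum $K(n)$.

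Finally, the reproducing property \eqref{eq:Szego} follows once $\mathcal{C}$ is identified as the orthogonal projection onto $\mathcal{H}$ given by convolution with the boundary trace of $S$. Applying $\mathcal{C}$ to $F^b$ for $F\in H^2(\mathcal{U}_n)$ yields $\mathcal{C}F^b=F^b$ because $F^b\in\mathcal{H}$, and the convolution identity together with the regularity of the kernel propagates this equality from the boundary into the interior, giving $F(q)=\int_{\partial\mathcal{U}_n}S(q,Q)F^b(Q)\,d\beta(Q)$ for $q\in\mathcal{U}_n$; the passage to the interior is justified by density of smooth compactly supported data and by controlling the vertical translates $F_\varepsilon$ as $\varepsilon\to 0$ through the $H^2$ bound \eqref{eq:def-H2}.
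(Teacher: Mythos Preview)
Your strategy is a genuinely different route from the paper's, and it is viable in principle, but two points deserve comment.

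First, the paper does \emph{not} use the group Fourier transform on $qH^{n-1}$ at all. After establishing existence of the reproducing kernel via the quaternionic Riesz representation theorem (Theorem~\ref{thm:CM08}), it exploits the \emph{full} symmetry group of $\mathcal{U}_n$: Heisenberg translations, $\mathrm{Sp}(n-1)$ rotations, the $\mathrm{Sp}(1)$ action $R_\sigma$, and dilations (Proposition~\ref{prop:Inv-kernel}). The $\mathrm{Sp}(n-1)$ invariance together with harmonicity in $q'$ and the maximum principle forces $S_b((q_1,q'),0)$ to be independent of $q'$; then the $\mathrm{Sp}(1)$ invariance and dilation homogeneity reduce $s(q_1)$ to its restriction to a semicircle in $\mathbb{R}^2_+$, where the Cauchy--Fueter equation becomes a small system of ODEs (Proposition~\ref{prop:unique}). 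Uniqueness up to a real constant follows from analyzing those ODEs, and the explicit formula $c_n\,\partial_{x_1}^{2n}(\bar q_1/|q_1|^4)$ is then checked to satisfy all the constraints (Corollary~\ref{cor:s}). The constant $c_n$ is fixed by evaluating $F(\mathbf{e})$ for $F=c_n^{-1}\overline{S(\mathbf{e},\cdot)}$ in two ways and computing an explicit integral --- this is where the combinatorics producing $K(n)$ actually appear, not in a Plancherel inversion.

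Second, your reduction ``invariance then forces the kernel to be a function of the single quaternionic combination $\sigma$'' is too quick as stated. Heisenberg invariance alone only yields $S(q,p)=S(p^{-1}\cdot q,0)$, which still depends on both the first coordinate and on $q'-p'\in\mathbb{H}^{n-1}$. To kill the $q'$-dependence you need either the $\mathrm{Sp}(n-1)$ rotation argument the paper uses, or a genuine spectral decomposition on the representation side that you have not written down. Likewise, your step three --- diagonalizing the Cauchy--Fueter system on each irreducible representation and carrying out the $\lambda$-integration --- is precisely the hard content, and you have left it as a black box; the paper's elementary ODE/symmetry argument avoids that machinery entirely. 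Your approach would, if completed, give a more representation-theoretic proof in the spirit of \cite{St}, while the paper's buys a self-contained argument using only classical harmonic-function estimates and explicit integrals.
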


The paper is organized as follows. In Section~\ref{sec:quaternion} we recall the structure of quaternion numbers and the Siegel upper half space,
mentioning some invariance properties. In Section~\ref{sec:regular} we study regular functions in domains of multidimensional quaternionic space.
In Section~\ref{sec:Hardy} we discuss the boundary value of regular functions in  the Siegel upper half space $\mathcal{U}_n$ and
invariance properties of the Hardy space $H^2(\mathcal{U}_n)$ over
$\mathcal{U}_n$. The main part of Section~\ref{sec:kernel} is devoted to determining the Cauchy-Szeg\"o kernel $S$ and the proof of Theorem~\ref{thm:CS}.


\section{\bf The quaternionic Siegel upper half space}\label{sec:quaternion}


\subsection{\bf Right quaternionic vector space}


   The space $\mathbb H$ of quaternionic numbers  forms a division algebra with respect to the coordinate addition and the quaternion multiplication
\begin{equation}\label{eq:q-sigma0}
\begin{split}
     q\sigma=& (x_1+\mathbf{i}x_2+\mathbf{j}x_3+\mathbf{k}x_4)( \sigma_1+ \mathbf{i}\sigma_{  2 }+ \mathbf{j}\sigma_{ 3 }+ \mathbf{k}\sigma_{ 4 })
     \\=&
     \hskip 6mm \sigma_{ 1 }x_1-  \sigma_{ 2 }x_2-  \sigma_{ 3 }x_3- \sigma_{ 4 }x_4
     +(\sigma_{ 2 }x_1+ \sigma_{ 1 } x_2+\sigma_{ 4 }x_3   -\sigma_{ 3 }x_4  )\mathbf{i}
     \\  + &
     \ \ ( \sigma_{ 3 } x_1-\sigma_{ 4 }x_2+ \sigma_{ 1 }x_3+ \sigma_{ 2 }x_4  )\mathbf{j}
     +(\sigma_{ 4 } x_1+  \sigma_{ 3 } x_2  -\sigma_{ 2 }x_3+\sigma_{ 1 }x_4) \mathbf{k},
\end{split}
\end{equation}
for $q,\sigma\in\bH$. Denote by $\re q=x_1$ the real part of $q$ and by $\im q$ the imaginary part of $q$ that is a three dimensional vector $\overrightarrow r=(x_2,x_3,x_4)$.

The conjugate $\overline{q}$ of a quaternion $q=x_1+x_2 \mathbf{i}+x_3 \mathbf{j}+x_4\mathbf{k}$ is defined by $  \overline{q}=x_1-x_2\mathbf{i}-x_3\mathbf{j}-x_4\mathbf{k}$ and the norm is $|q|^2=\overline q q$. The conjugation inverses the product of quaternion numbers in the following sense
$\overline{\sigma q}=\overline q\cdot\overline\sigma$ for any $\sigma,\tau \in \mathbb H$.
As a vector space, $\bH$ is isomorphic to $\mathbb R^4$.

Since the quaternionic algebra $\mathbb{H}$ is associative, although it is not commutative, there is a natural notion of a vector space over $\mathbb{H}$, and many
definitions and propositions for real or complex linear algebra also hold for quaternionic linear spaces, see~\cite{A03}, ~\cite{Port} ,~\cite{Wa11}.
Let us recall here some definitions and basic properties of vector spaces over $\bH$.

A {\it right quaternionic vector space} is a set $V$ with addition $+\colon V\times V\rightarrow V$ and {\it right scalar multiplication} $ V\times \mathbb{H} \rightarrow V, (v,\sigma)\mapsto v \sigma$, where $V$ is an abelian group with respect to the addition, and the right scalar multiplication satisfies the following axioms:
\begin{itemize}
\item[(1)]{ $(v+w)\sigma=v\sigma+w \sigma$,}
\item[(2)]{$ v(\sigma_1+\sigma_2)=v\sigma_1+v \sigma_2$,}
\item[(3)]{ $ v(\sigma_1 \sigma_2)=(v\sigma_1)\sigma_2$,}
\item[(4)]{ $v1=v$,}
\end{itemize}
for any $v,w\in V$ and $\sigma, \sigma_1,\sigma_2\in \mathbb{H}$.

A {\it  hyperhermitian semilinear form} on a right quaternionic vector space  $V$  is a map
$a\colon V \times V \longrightarrow \mathbb{H}$
satisfying the following properties:
\begin{itemize}
\item[(1)]{ $a$ is additive with respect to each argument,}
\item[(2)]{ $a(q, q'   \sigma)= a(q, q' )   \sigma$ for any $q,q'  \in V$ and any $\sigma\in \mathbb{H} $,}
\item[(3)]{ $a(q, q')= \overline{a(q', q)}$.}
\end{itemize}
Properties
(2) and (3) imply that $a$ is conjugate right   linear with respect to the first argument:  $ a(q\sigma, q'   )= \overline{\sigma} a(q, q' )  $.

A quaternionic $(n\times n)$-matrix $A $ is called {\it hyperhermitian} if $A^*=A$, where $(A^*)_{jk}:=\overline{A}_{kj}$.  For instance, for $ q = (q_1, \ldots ,q_n)$, $p = (p_1,  \ldots ,p_n)\in\mathbb{H}^n$,
set
$a(q, p)=
\sum_{i,j}
 \overline{q}_iA_{ij} p_j$.
  Then $a(\cdot, \cdot)$ defines a hyperhermitian semilinear form on $\mathbb{H}^n$.

A hyperhermitian semilinear form $a(\cdot, \cdot)$ is called {\it positive definite} if
 $a(v,v)\geq 0$ for any $v\in V$, and $a(v,v)= 0$ if and only if $v=0$.
 A  positive definite hyperhermitian semilinear form $ a(\cdot, \cdot)$ on a  right quaternionic vector space is called an {\it  inner product} and will be denoted from now on by $ \langle v, w\rangle:=a(v, w)$.

Now set
\begin{equation}\label{eq:norm}
   \|v\|:=\langle v,v\rangle^{\frac 12},\qquad\text{and}\qquad \rho(v,w)=\|v-w\|.
\end{equation}
The value $\|v\|$ is called the norm of $v\in V$ and $\rho(v ,w)$ is a distance between $v$ and $w$ on $~V$. To show that $\rho(\cdot ,\cdot)$ is a distance, we need the quaternion version of the Cauchy-Schwarz inequality:
$
     |\langle v,w\rangle|\leq \|v\|\|w\|,
$
that follows from the observation
\begin{equation*}
     0\leq \langle v-w\sigma,v-w\sigma\rangle=\langle v ,v\rangle-\overline{\sigma}\langle w,v \rangle-\langle v, w\rangle \sigma+|\sigma|^2\langle w, w \rangle.
\end{equation*}
Write $\langle v ,w \rangle=r\xi$ for a unit quaternion $\xi$ and  $r\geq0$, and choose $\sigma=t\overline{\xi}$, $t\in \mathbb{R}$. Then we find that $0\leq\|v\|^2-2rt+t^2\|w\|^2$ for any $t$. The Cauchy-Schwarz inequality follows. This makes $V$ as a space of homogeneous type.

If  $\rho(\cdot ,\cdot)$ is a complete  distance, we call $(V, \langle \cdot, \cdot\rangle )$ a {\it right quaternionic Hilbert space}.

\begin{prop}{\rm (The quaternion version of Riesz's representation theorem)} Suppose that $(V,\langle \cdot,\cdot\rangle)$ is a right quaternionic Hilbert space and $h\colon V\rightarrow  \mathbb{H}$ is a bounded right quaternionic linear functional: $h$ is additive and $h(v\sigma)=h(v)\sigma$ for any $v \in V$ and $\sigma \in \mathbb{H}$. Then there exists a unique element $v_h\in V$ such that
\begin{equation*}
    h(v)=\langle v_h,v\rangle,\qquad\text{ for any}\quad v\in V.
\end{equation*}
\end{prop}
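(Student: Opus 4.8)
The plan is to transcribe the classical Hilbert-space proof of the Riesz representation theorem, paying attention at each step to the non-commutativity of $\bH$ and to the fact that the inner product $\langle\cdot,\cdot\rangle$ is right linear in its second argument but conjugate right linear in its first. If $h\equiv 0$ one simply takes $v_h=0$, so assume $h\not\equiv 0$. Since $h$ is bounded it is continuous, hence its kernel $N:=\{v\in V : h(v)=0\}$ is a closed right-linear subspace of $V$, and it is proper because $h\not\equiv 0$.

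The crucial geometric input is an orthogonal-projection statement: first I would produce a nonzero $z$ with $z\perp N$. Picking $x\notin N$, I minimize $\|x-y\|$ over $y\in N$. The parallelogram identity $\|v+w\|^2+\|v-w\|^2=2\|v\|^2+2\|w\|^2$ holds in $V$, since the quaternion-valued cross terms $\langle v,w\rangle+\langle w,v\rangle=2\re\langle v,w\rangle$ cancel upon subtraction; completeness of $\rho$ then yields a minimizer $y_0\in N$, and I set $z:=x-y_0\neq 0$. Testing minimality against $y_0+tw$ for real $t$ and $w\in N$ gives $\re\langle z,w\rangle=0$. Here a genuinely quaternionic step intervenes: to upgrade this to $\langle z,w\rangle=0$, I use that $N$ is a right submodule, so $w\sigma\in N$ for every $\sigma\in\bH$; applying the real-part identity to $w,w\bfi,w\bfj,w\bfk$ and reading off $\re\big(\langle z,w\rangle\sigma\big)$ for $\sigma\in\{1,\bfi,\bfj,\bfk\}$ forces all four components of $\langle z,w\rangle$ to vanish. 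Thus $z\in N^\perp\setminus\{0\}$, and in particular $h(z)\neq 0$.

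With such a $z$ in hand the representative is produced by a direct computation. For arbitrary $v\in V$ the vector $u:=v-z\,h(z)^{-1}h(v)$ lies in $N$, since $h(u)=h(v)-h(z)\,h(z)^{-1}h(v)=0$ by right linearity. Orthogonality $\langle z,u\rangle=0$ together with right linearity in the second slot, $\langle z,z\,h(z)^{-1}h(v)\rangle=\|z\|^2h(z)^{-1}h(v)$, yields $h(v)=h(z)\|z\|^{-2}\langle z,v\rangle$. I then set
\begin{equation*}
 v_h:=\frac{z\,\overline{h(z)}}{\|z\|^2},
\end{equation*}
and check, using conjugate right linearity in the first argument, that $\langle v_h,v\rangle=\overline{\big(\|z\|^{-2}\overline{h(z)}\big)}\,\langle z,v\rangle=h(z)\|z\|^{-2}\langle z,v\rangle=h(v)$ for all $v$. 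Uniqueness is immediate: if $\langle v_h,v\rangle=\langle v_h',v\rangle$ for every $v$, then $\langle v_h-v_h',v\rangle=0$, and taking $v=v_h-v_h'$ and invoking positive definiteness forces $v_h=v_h'$. I expect the main obstacle to be the second paragraph: establishing the orthogonal-projection theorem in the quaternionic setting, and especially recovering full quaternionic orthogonality from real-part orthogonality by exploiting the right-module structure of $N$. Everything downstream is then a careful but routine bookkeeping of left/right scalars.
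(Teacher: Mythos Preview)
Your proof is correct and follows essentially the same route as the paper: kernel $N=\ker h$, a nonzero $z\in N^\perp$, then $v-z\,h(z)^{-1}h(v)\in N$ yields the representative $v_h=z\,\overline{h(z)}/\|z\|^2$ (which is exactly what the paper's choice $v_0\|v_0\|^{-2}$ with $h(v_0)=1$ amounts to). The only difference is that you actually justify the existence of $z\in N^\perp\setminus\{0\}$ via the parallelogram-law minimization and the right-module trick to upgrade $\re\langle z,w\rangle=0$ to $\langle z,w\rangle=0$, whereas the paper simply asserts $M^\perp\neq\{0\}$ without argument; so your version is, if anything, more complete.
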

\begin{proof}
Let $M=\ker h$, where $\ker h$ is the kernel of the linear functional $h$. Then $M$ is a closed subspace because $h$ is continuous. Moreover, $M$ is a right quaternionic linear space since $h$ is.
Set $M^\perp:=\{v\in V\vert\ \  \langle w,v\rangle=\overline{\langle v,w \rangle}=0$ for any $w\in M\}$. If $h$ is non-vanishing, then $M\neq V$ and so $M^\perp\neq\{0\}$. Thus, there exists an element $v_0\in M^\perp$ such that $h(v_0)=1$. Now $h(v-v_0h(v))=h(v)-h(v_0)h(v) =0$ for any $v\in V$, i.e., $v-v_0h(v)\in M$. So
\begin{equation*}
    0=\langle v_0,v-v_0h(v)\rangle =\langle v_0,v\rangle -\|v_0\|^2h(v).
\end{equation*}
Namely, we can choose $v_h=v_0\|v_0\|^{-2}$. The uniqueness is easily follows from the positive definiteness of the product.
\end{proof}

At the end of the subsection we notice that the space $\bH^n$ is a complete right quaternionic Hilbert space endowed with the inner product
\begin{equation}
\label{eq:inner-product}
\langle p,q\rangle=\sum_{l=1}^{n}\bar p_l q_l,\qquad p=(p_1,\ldots,p_n),\ \ q=(q_1,\ldots,q_n)\in\bH^n.
\end{equation}


\subsection{\bf The quaternionic Siegel upper half space and the quaternionic Heisenberg group}


The next step is to present transformations acting on the Siegel upper half space.
A quaternionic $(n\times n)$-matrix $ \mathbf{a}=(a_{jk})$ acts on $\mathbb H^n$ on left as follows:
\begin{equation}\label{eq:A}
     q\longmapsto  \mathbf{a}q,\qquad ( \mathbf{a}q)_j= \sum_{k=1}^n a_{jk}q_k
\end{equation}
for $q=(q_1,\ldots q_n)^t $, where the upper index $^t$ denotes the transposition of the vector.  Note that the transformation in (\ref{eq:A}) commutes with right multiplication by $\mathbf{i}_\beta $ ($\mathbf{i}_1= 1$, $\mathbf{i}_2= \mathbf{i}$, $\mathbf{i}_3= \mathbf{j}$, $\mathbf{i}_4= \mathbf{k}$), i.e.
\begin{equation*}
   ( \mathbf{a}q)\mathbf{i}_\beta = \mathbf{a}(q\mathbf{i}_\beta ) .
\end{equation*}    Namely, $\mathbf{a}$ transforms a right  quaternionic line to a right  quaternionic line.  Here the  {\it right quaternionic line} through the origin and the point $q=(q_1,\ldots,q_n)^t$ we mean the set
$\{(q_1\sigma,\ldots,q_n\sigma)^t|\sigma\in \mathbb{H} \}$. The group ${\rm
GL}(n ,\mathbb H )$ is isomorphic to the group of all  linear  transformations   of $\mathbb{R}^{4n}$ commuting with  $\mathbf{i}_\beta $, while the compact Lie group $ {\rm
Sp}(n  )$ consists of   orthogonal  transformations   of $\mathbb{R}^{4n}$ commuting with    $\mathbf{i}_\beta $.

\begin{prop} \label{prop:transformations} The   Siegel upper half  space $\mathcal{U}_n$ is invariant under the following transformations.
\begin{itemize}
\item[(1)] {Translates:
\begin{equation}\label{eq:translates}
     \tau_p : (q_1, q') \longmapsto \left(q_1+p_1+2\langle p',q'\rangle,q'+p'\right),
\end{equation}
for  $p=(p_1,p')=(p_1,\ldots,p_n)\in \partial\mathcal{U}_n$, where $p'=(p_2,\ldots,p_n)\in \mathbb{H}^{n-1 }$. }
\item[(2)]{ Rotations:
\begin{equation}\label{eq:rotations-1}
  R_{\mathbf{a}} :  (q_1 ,q' )\longmapsto (q_1 ,\mathbf{a}q' )
\end{equation}
for   $\mathbf{a}\in {\rm Sp}(n-1)$, and
\begin{equation}\label{eq:rotations-2}
  R_{\sigma} :   (q_1 ,q' )\longmapsto (\overline{\sigma} q_1 \sigma , q'\sigma )
\end{equation}
for   $\sigma\in \mathbb{H}$ with $|\sigma|=1$.}
\item[(3)]{ dilations:
 \begin{equation}\label{eq:dilations}
 \delta_r:    (q_1 ,q' )\longmapsto ( r^2 q_1  , rq'  ),\quad r>0.
\end{equation}
}
\end{itemize}
All the maps are extended to the boundary $\partial\mathcal U_n$ and transform the boundary $\partial\mathcal U_n$ to itself. Moreover, all the maps transform  the hypersurface   $\partial\mathcal U_n+\varepsilon\mathbf{e}$ to itself for each $\varepsilon>0$.
\end{prop}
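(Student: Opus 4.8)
The plan is to encode all three families of maps through the single defining function $\phi(q):=\re q_1-|q'|^2$, so that $\mathcal U_n=\{\phi>0\}$, $\partial\mathcal U_n=\{\phi=0\}$, and the parallel hypersurface $\partial\mathcal U_n+\varepsilon\mathbf e=\{\phi=\varepsilon\}$ for every $\varepsilon\ge 0$. Each map in the statement is an affine or $\bH$-linear bijection of $\bH^n$ with an explicit inverse of the same type ($\tau_p^{-1}$ is again a translate, $R_{\mathbf a}^{-1}=R_{\mathbf a^*}$, $R_\sigma^{-1}=R_{\bar\sigma}$ since $\sigma\bar\sigma=1$, and $\delta_r^{-1}=\delta_{1/r}$), so once I show a given level set is mapped into one of these level sets, surjectivity onto it follows from invertibility. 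Thus the whole proposition reduces to computing how $\phi$ transforms under each map.

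For the translates I would expand
\[
\phi(\tau_p(q))=\re q_1+\re p_1+2\re\langle p',q'\rangle-|q'+p'|^2 ,
\]
and use $|q'+p'|^2=|q'|^2+\langle p',q'\rangle+\langle q',p'\rangle+|p'|^2$ together with $\langle q',p'\rangle=\overline{\langle p',q'\rangle}$, so the two cross terms collapse to $2\re\langle p',q'\rangle$ and cancel against the term coming from the first coordinate. This yields the clean identity $\phi(\tau_p(q))=\phi(q)+\phi(p)$; since $p\in\partial\mathcal U_n$ forces $\phi(p)=0$, I conclude $\phi\circ\tau_p=\phi$, so $\tau_p$ fixes every $\{\phi=\varepsilon\}$.

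For the rotations I treat the two types separately. For $R_{\mathbf a}$ with $\mathbf a\in\mathrm{Sp}(n-1)$ the first coordinate is untouched and $|\mathbf a q'|^2=|q'|^2$, because $\mathbf a^*\mathbf a=I$ makes $\mathbf a$ an isometry for the inner product \eqref{eq:inner-product}; hence $\phi\circ R_{\mathbf a}=\phi$. For $R_\sigma$ with $|\sigma|=1$ I use two facts: first $\re(\bar\sigma q_1\sigma)=\re q_1$, which follows from $\re(ab)=\re(ba)$ applied with $a=\bar\sigma$, $b=q_1\sigma$, together with $\sigma\bar\sigma=1$; second $|q'\sigma|^2=\bar\sigma\,|q'|^2\,\sigma=|q'|^2$, since $|q'|^2$ is a real scalar that commutes past $\sigma$ and $\bar\sigma\sigma=1$. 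Therefore $\phi\circ R_\sigma=\phi$ as well, and both rotations fix each $\{\phi=\varepsilon\}$.

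Finally the dilation gives $\phi(\delta_r(q))=r^2\re q_1-r^2|q'|^2=r^2\phi(q)$. As $r^2>0$, this preserves the sign of $\phi$, so $\delta_r$ maps $\mathcal U_n$ onto itself and $\partial\mathcal U_n$ onto itself. This map is also the one point that needs care, and is the only real obstacle: on the $\varepsilon$-level it sends $\{\phi=\varepsilon\}$ to $\{\phi=r^2\varepsilon\}=\partial\mathcal U_n+r^2\varepsilon\mathbf e$, so a dilation fixes a given parallel hypersurface only when $r=1$; in general it permutes the family $\{\,\partial\mathcal U_n+\varepsilon\mathbf e:\varepsilon>0\,\}$ by rescaling the label $\varepsilon\mapsto r^2\varepsilon$ while fixing the boundary $\varepsilon=0$. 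I would therefore state the $\varepsilon$-clause precisely in the final writeup: translates and rotations fix every parallel hypersurface, whereas dilations rescale $\varepsilon$. Apart from this bookkeeping, the argument is entirely routine once $\phi$ is introduced; the only places demanding genuine attention are the cross-term cancellation for $\tau_p$ and the conjugation-invariance of both $\re$ and $|q'|^2$ for $R_\sigma$ in the non-commutative setting.
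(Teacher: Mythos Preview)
Your approach is essentially the paper's approach: both compute how the defining quantity $\re q_1-|q'|^2$ transforms under each map. Two points are worth noting. First, your argument for $R_\sigma$ is cleaner than the paper's: you use the trace identity $\re(ab)=\re(ba)$ to get $\re(\bar\sigma q_1\sigma)=\re q_1$ in one line, whereas the paper goes through the characterization $q^2=-1\iff q\in\im\bH,\ |q|=1$ to argue that the imaginary part of $q_1$ stays imaginary under conjugation. Second, you correctly flag that the final clause of the statement is literally false for dilations: $\delta_r$ sends $\{\phi=\varepsilon\}$ to $\{\phi=r^2\varepsilon\}$, so it permutes the family of parallel hypersurfaces rather than fixing each one. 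The paper's proof simply says ``the other statements are obvious'' at that point and does not address this; for the later applications (invariance of the Hardy space, where one takes $\sup_{\varepsilon>0}$) the permutation is harmless, but your reading of the statement and your proposed fix are accurate.
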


\begin{proof} The formula~\eqref{eq:translates} follows from
\begin{equation}\label{eq:quadra-multi}\begin{split}
   &{\rm Re}( q_1+p_1+2\langle p',q'\rangle )-|q'+p'|^2\\=&\re q_1+\re p_1+ 2{\rm Re}\langle p',q'\rangle-(|q'|^2+|p'|^2+2{\rm Re}\langle p',q'\rangle)\\=&\re( q_1 )-|q' |^2  >0
\end{split}\end{equation}
by $\re p_1=|p'|^2$.

The rotations (\ref{eq:rotations-1}) obviously map   $\mathcal{U}_n$ to itself. For rotations (\ref{eq:rotations-2}), note that
\begin{equation}\label{eq:unit}
   q_1^2=-1\hskip 3mm   \text{   if and only if   } \hskip 3mm  x_1=0 \text{   and   }x_2^2+x_3^2+x_4^2=1
\end{equation}
for a quaternion number $q_1=x_1+\mathbf{i}x_2+\mathbf{j}x_3+\mathbf{k}x_4 $.
  This is because of
\begin{equation*}
     q_1^2=x_1^2+2x_1(\mathbf{i}x_2+\mathbf{j}x_3+\mathbf{k}x_4 )+(\mathbf{i}x_2+\mathbf{j}x_3+\mathbf{k}x_4 )^2
\end{equation*}
and
\begin{equation}\label{eq:sum-square}
(\mathbf{i}x_2+\mathbf{j}x_3+\mathbf{k}x_4 )^2=-|\mathbf{i}x_2+\mathbf{j}x_3+\mathbf{k}x_4 |^2=-x_2^2-x_3^2-x_4^2 .
\end{equation}
    Since
\begin{equation}\label{eq:sigma-q}
    \overline{\sigma} q_1 \sigma =x_1+\overline{\sigma}(\mathbf{i}x_2+\mathbf{j}x_3+\mathbf{k}x_4)\sigma,
\end{equation}
 and
\begin{equation*}
        \overline{\sigma}( \mathbf{i}x_2+\mathbf{j}x_3+\mathbf{k}x_4)\sigma
        \overline{\sigma}(\mathbf{i}x_2+\mathbf{j}x_3+\mathbf{k}x_4)\sigma=-x_2^2-x_3^2-x_4^2,
\end{equation*}by (\ref{eq:sum-square}),
we see that the second term in the right hand side of (\ref{eq:sigma-q}) is imaginary by using (\ref{eq:unit}). Consequently, $\re (\overline{\sigma} q_1 \sigma)=\re(  q_1  )$ and so
\begin{equation}\label{eq:adjoint-sigma}
     \re  (\overline{\sigma} q_1 \sigma)-| q'\sigma |^2=\re (  q_1     )-|q'|^2.
\end{equation}
The invariance  of the hypersurface   $\partial\mathcal U_n+\varepsilon\mathbf{e}$ under the maps  $\tau_p$ and $R_\sigma$ follows from (\ref{eq:quadra-multi}) and (\ref{eq:adjoint-sigma}).
 The other statements  are obvious. The result follows.
\end{proof}

The total group of rotations for $\mathcal U_n$ is ${\rm Sp}(n-1){\rm Sp}( 1)$ with ${\rm Sp}( 1)\cong \{\sigma\in \mathbb{H}\vert\   |\sigma|=1 \}$.

\begin{rem}
Translate $\tau_p$ can be viewed as an action of the quaternionic Heisenberg group $qH^{n-1}$ on the quaternionic Siegel upper half space $\mathcal U_n$.  Let $p=(v,p')\in qH^{n-1}$, then the translates~\eqref{eq:translates} can be written as
$$
\tau_p : (q_1, q') \longmapsto (q_1+|p'|^2+v+2\langle p',q'\rangle,q'+p').
$$
It is obviously extended to the boundary $\partial \mathcal U_n$. It is easy to see that the action on $\partial \mathcal U_n$ is transitive, for calculation see also~\cite{CChM}. Therefore, we can identify points in $qH^{n-1}$ with points in $\partial \mathcal U_n$ by the result of the translates by $\tau_p$ of the origin $(0,0)$.
\end{rem}


\section{\bf Regular functions on the quaternionic Siegel upper half space}\label{sec:regular}


In the present Section we   show the invariance of the regularity under linear transformations in Proposition \ref{prop:transformations}.

\begin{prop} \label{prop:D-A}  Let $f \colon D\to\mathbb{H}$ be $C^1$-smooth function, where $D$ is a domain in~$\mathbb H^n$.
\begin{itemize}
\item[(1)]{ Define the pull-back function $\hat f$ of $f$ under the mapping $  q \rightarrow Q= \mathbf{a}   q  $ for $ \mathbf{a}=({ a}_{jk })\in {\rm
GL}( n ,\mathbb H)$ by
$
     \widehat{f}(q ):=f( \mathbf{a} q )$. Then we have
\begin{equation}\label{eq:D-A'}
    \overline{\partial}_{  q_j }  \widehat{f}(   q ) =\sum_{k =1}^n  \left.\overline{ a}_{kj }\overline{\partial}_{Q_k }f( Q )\right|_{Q= \mathbf{a}   q}.
\end{equation}}
\item[(2)] { Define the pull-back function $\widetilde f$ of $f$ under the mapping $ q \rightarrow Q=   q \sigma $ for $\sigma\in \mathbb H$ by
$
     \widetilde{ {f}}(  q ):=f(  q_1 \sigma,\ldots, q_n \sigma )
$.
     Then
\begin{equation}\label{eq:D-sigma}
    \overline{\partial}_{  q_l }\widetilde{ {f}}(  q )=  \left. \overline{\partial}_{Q_l } [ \overline{\sigma} f ( Q)]\right|_{Q=     q\sigma},\qquad l=1,\ldots,n.
\end{equation}
}
\end{itemize}
\end{prop}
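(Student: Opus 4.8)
I would prove both identities by the chain rule applied to the Cauchy–Fueter operator $\overline\partial_{q_l}=\partial_{x_{4l-3}}+\mathbf{i}\partial_{x_{4l-2}}+\mathbf{j}\partial_{x_{4l-1}}+\mathbf{k}\partial_{x_{4l}}$ from~\eqref{eq:Cauchy-Fueter}, being very careful about the order of quaternionic multiplication, since $\mathbb H$ is non-commutative. The key algebraic fact I would isolate first is that each coefficient $\mathbf{i}_\beta$ of $\overline\partial$ sits to the \emph{left} of the derivative, whereas the scalar multiplications appearing in the two pull-backs ($Q_k=\sum_j a_{kj}q_j$, respectively $Q_l=q_l\sigma$) must be handled on the correct side. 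So before any differentiation I would record how $\overline\partial_{q_l}$ interacts with a quaternion constant placed on the left versus on the right.

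\textbf{Part (1).} For $\widehat f(q)=f(\mathbf a q)$ with $Q_k=\sum_m a_{km}q_m$, the real variables of $Q_k$ are real-linear combinations of those of $q$. The plan is to expand $\overline\partial_{q_j}\widehat f$ via the real chain rule, $\partial_{x_{4j-3+\alpha}}\widehat f=\sum_{k,\beta}\frac{\partial (Q_k)_\beta}{\partial x_{4j-3+\alpha}}\,(\partial_{(Q_k)_\beta}f)$, and then to reassemble the four real pieces into quaternionic operators. The heart of the matter is the identity
\begin{equation*}
\sum_{\alpha}\mathbf{i}_\alpha\,\partial_{x_{4j-3+\alpha}}\big[(a_{km}q_m)_\beta\big]
=\big(\overline{a}_{kj}\,\mathbf{i}_\beta\big),
\end{equation*}
i.e.\ differentiating the left-multiplication $a_{km}q_m$ in the variables of $q_j$ (nonzero only when $m=j$) and contracting against the $\mathbf{i}_\alpha$ produces precisely $\overline a_{kj}$ acting on the left of $\partial_{Q_k}f$. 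This is the conjugation-reversal phenomenon: because $\overline\partial$ pairs a \emph{left} coefficient $\mathbf{i}_\alpha$ against the derivative of a \emph{left} multiple $a_{kj}q_j$, the bookkeeping collapses to $\overline a_{kj}\,\overline\partial_{Q_k}f$ summed over $k$, giving~\eqref{eq:D-A'}. I would verify this contraction by direct comparison with the multiplication table~\eqref{eq:q-sigma0}, or more cleanly by writing $q_j=\sum_\alpha x_{4j-3+\alpha}\mathbf{i}_\alpha$ and checking the single-variable case $Q=aq$.

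\textbf{Part (2).} Here $\widetilde f(q)=f(q_1\sigma,\dots,q_n\sigma)$, so $Q_l=q_l\sigma$ is a \emph{right} multiple. Now the coordinates of $Q_l$ depend only on those of $q_l$, so the $l$-th operator $\overline\partial_{q_l}$ only sees the $l$-th slot and there is no sum over $k$. The plan is again to expand in real coordinates and contract against $\mathbf{i}_\alpha$; the new feature is that I must track how right-multiplication by $\sigma$ threads through. The cleanest route is to use the scalar identity $\overline\partial_{q}(f(q\sigma))=\overline\partial_{Q}[\overline\sigma\,f(Q)]|_{Q=q\sigma}$ for a single variable, proved by noting that $\sum_\alpha \mathbf{i}_\alpha\,\partial_{x_{4l-3+\alpha}}$ applied to a composite with $Q=q\sigma$ yields $\overline\sigma$ emerging on the left of the transformed operator—mirroring the conjugation in property~(2)–(3) of the hyperhermitian form. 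I would either verify this by the explicit product rule using~\eqref{eq:q-sigma0}, or, more conceptually, by observing that $\sigma\mapsto(q\mapsto q\sigma)$ and the identity $\overline\partial_q(q\sigma)=\overline\partial_q(q)\,\sigma=\dots$ reduces to the constant-coefficient case after factoring $\overline\sigma$ to the left.

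\textbf{Main obstacle.} The genuine difficulty in both parts is entirely the non-commutativity: the coefficients $\mathbf{i}_\alpha$ of $\overline\partial$ do not commute past the matrix entries $a_{kj}$ or the scalar $\sigma$, and it is precisely their failure to commute that \emph{creates} the conjugations $\overline a_{kj}$ and $\overline\sigma$ in the conclusions. I expect the cleanest presentation is to prove the one-variable building block first—for a single quaternion variable $q$ and constant $c$, compute $\overline\partial_q\big(g(cq)\big)$ and $\overline\partial_q\big(g(qc)\big)$ explicitly from~\eqref{eq:q-sigma0}—and then simply sum over the index $j$ (for~(1)) or restrict to one slot (for~(2)). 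Everything else is routine real-variable chain rule, so I would not grind through the component-by-component expansion in the writeup.
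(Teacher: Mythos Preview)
Your approach is correct and essentially the same as the paper's: both parts reduce to the real chain rule followed by reassembly of the quaternionic pieces, with the conjugates $\overline a_{kj}$ and $\overline\sigma$ emerging from the interaction between the left-placed coefficients $\mathbf{i}_\alpha$ of $\overline\partial$ and the multiplication map. For part~(1) the paper simply cites an external reference, while for part~(2) it carries out exactly your ``explicit product rule'' verification by introducing the real $4\times4$ matrix $\widetilde{\sigma}^{\mathbb R}$ of right multiplication by $\sigma$ (your~\eqref{eq:q-sigma0}) and observing the clean identity $\widetilde{\overline\sigma}^{\mathbb R}=(\widetilde{\sigma}^{\mathbb R})^t$, which is precisely the mechanism producing the $\overline\sigma$ on the left.
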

\begin{proof}
The proof of the first statement can be found in~\cite[ Proposition 3.1]{Wa11}.

The second statement is analogous to the formula of one quaternionic variable. Write the $l$-th coordinate of $q=(q_1,\ldots,q_n)$ as the quaternionic number $q_l= x_1+\mathbf{i}x_2+\mathbf{j}x_3+\mathbf{k}x_4\in \mathbb{H} $, and define the {\it associated real vector $q^{\mathbb{R}}_l:=(x_1,x_2,x_3,x_4)^t$} in $\mathbb{R}^{4 }$. Then for the product
\begin{equation}\label{eq:q-sigma}\begin{split}
     q_l\sigma=& (x_1+\mathbf{i}x_2+\mathbf{j}x_3+\mathbf{k}x_4)( \sigma_1+ \mathbf{i}\sigma_{  2 }+ \mathbf{j}\sigma_{ 3 }+ \mathbf{k}\sigma_{ 4 })\\=& \hskip 6mm \sigma_{ 1 }x_1-  \sigma_{ 2 }x_2-  \sigma_{ 3 }x_3- \sigma_{ 4 }x_4\\ &+(\sigma_{ 2 }x_1+ \sigma_{ 1 } x_2+\sigma_{ 4 }x_3   -\sigma_{ 3 }x_4  )\mathbf{i}\\&  +( \sigma_{ 3 } x_1-\sigma_{ 4 }x_2+ \sigma_{ 1 }x_3+ \sigma_{ 2 }x_4  )\mathbf{j}\\&+(\sigma_{ 4 } x_1+  \sigma_{ 3 } x_2  -\sigma_{ 2 }x_3+\sigma_{ 1 }x_4) \mathbf{k}
\end{split}\end{equation}
we define the associated matrix
\begin{equation}\label{eq:sigma-R}
 \widetilde{\sigma}^{\mathbb{R}} :=     \left(
\begin{array}{rrrr }\sigma_{ 1  }&-  \sigma_{  2 }&- \sigma_{ 3  }&- \sigma_{4   }  \\  \sigma_{2   }&   \sigma_{1   }&  \sigma_{ 4  }&- \sigma_{  3 } \\  \sigma_{ 3  }&-  \sigma_{  4 }&  \sigma_{ 1  }&  \sigma_{ 2  } \\ \sigma_{ 4  }&  \sigma_{ 3  }&- \sigma_{ 2  }&  \sigma_{  1 } \end{array}\right).
\end{equation}
Thus  (\ref{eq:q-sigma}) can be written as
\begin{equation}\label{eq:widetildeX-q}
      (q_l\sigma)^{\mathbb{R}}= \widetilde{\sigma}^{\mathbb{R}} q_l^{\mathbb{R}},
\end{equation}
 for $\widetilde{\sigma}^{\mathbb{R}}$   given by (\ref{eq:sigma-R}). It follows from (\ref{eq:sigma-R}) that
$    \widetilde{\overline{\sigma}}^{\mathbb{R}}=\left(\widetilde{\sigma}^{\mathbb{R}}\right)^t
$, where $ \overline{ \sigma}$  is the conjugate of~$ {\sigma}$.

Denote $(y_1,\ldots,y_4)^t=\widetilde{\sigma}^{\mathbb{R}}(x_1,\ldots,x_4)^t$, i.e. $y_k=\sum_{k=1}^4\widetilde{\sigma}^{\mathbb{R}}_{kj}x_j$, $k=1,\ldots,4$. Since $\partial_{x_j}[ f(\ldots,$ $\widetilde{\sigma}^{\mathbb{R}}q_j^{\mathbb{R}}, \ldots)]=\sum_{k=1}^4 \widetilde{\sigma}^{\mathbb{R}}_{kj}\partial_{y_k}f(\ldots,y,\ldots)$, we find that
 \begin{equation*}\begin{split}
    \overline{\partial}_{q_l}[ f( \ldots,\widetilde{\sigma}^{\mathbb{R}}q^{\mathbb{R}},\ldots )]&=\sum_{j =1}^4 \mathbf{i}_{j } \partial_{x_j}[f( \ldots,\widetilde{\sigma}^{\mathbb{R}}q^{\mathbb{R}},\ldots)]\\&=\sum_{j,k=1}^4 \mathbf{i}_{j }\widetilde{\sigma}^{\mathbb{R}}_{kj}\partial_{y_k}f( \ldots,y,\ldots )
    =\sum_{j,k  =1}^4 \mathbf{i}_{j } \widetilde{\overline{\sigma}}^{\mathbb{R}}_{jk}\partial_{y_k}f( \ldots,y,\ldots)\\&
    =  \overline{\partial}_{Q_l }( \overline{\sigma} f)( \ldots, Q_l,\ldots ),
\end{split} \end{equation*}
by
$
   \Big ( \sum_{j =1}^4 \mathbf{i}_j\partial_{y_j}\Big)\overline{ {\sigma}}=\sum_{j,k=1}^4 \mathbf{i}_j\widetilde{\overline{\sigma}}^{\mathbb{R}}_{jk}\partial_{y_k}
$
and~\eqref{eq:widetildeX-q}.
\end{proof}

\begin{cor}
If $f$ is regular, then $\hat f=f(\mathbf{a}  q )$  for some $ \mathbf{a}\in {\rm
GL}( n ,\mathbb H)$ and $\widetilde f=f(  q \sigma )$ for some $ \sigma\in  \mathbb H $ are both regular.
\end{cor}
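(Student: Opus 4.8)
The plan is to read off both regularities directly from the transformation rules for the Cauchy--Fueter operator established in Proposition~\ref{prop:D-A}, feeding in the hypothesis that $f$ satisfies the Cauchy--Fueter equations \eqref{eq:CF}. In each case one source operator $\overline\partial_{q_l}$ is expressed, via \eqref{eq:D-A'} and \eqref{eq:D-sigma}, through target operators $\overline\partial_{Q_k}$ that annihilate $f$, so the whole task is to check that the constants produced by the change of variables do not obstruct this vanishing.

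For the first pull-back $\widehat f(q)=f(\mathbf{a}q)$ with $\mathbf{a}\in\GL(n,\mathbb H)$, I would simply substitute $\overline\partial_{Q_k}f\equiv 0$ into the right-hand side of \eqref{eq:D-A'}. Every summand $\overline a_{kj}\,\overline\partial_{Q_k}f\big|_{Q=\mathbf{a}q}$ then vanishes, so $\overline\partial_{q_j}\widehat f\equiv 0$ for each $j$ and $\widehat f$ is regular. No ordering issue arises here, because in \eqref{eq:D-A'} the constant entries $\overline a_{kj}$ stand to the \emph{left} of the operator $\overline\partial_{Q_k}$ that already annihilates $f$; whatever these constants are, they multiply zero.

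For the second pull-back $\widetilde f(q)=f(q\sigma)$ with $\sigma\in\mathbb H$, the same strategy routes through \eqref{eq:D-sigma}, which reads $\overline\partial_{q_l}\widetilde f(q)=\overline\partial_{Q_l}\big[\overline\sigma f(Q)\big]\big|_{Q=q\sigma}$, so the claim reduces to showing that $Q\mapsto\overline\sigma f(Q)$ is again annihilated by each $\overline\partial_{Q_l}$. First I would expand, using that $\overline\sigma$ is constant, into $\overline\partial_{Q_l}[\overline\sigma f]=\sum_{\beta}\mathbf{i}_\beta\,\overline\sigma\,\partial_{x_\beta}f$, and then insert the regularity relation $\partial_{x_1}f=-\sum_{\beta\ge 2}\mathbf{i}_\beta\partial_{x_\beta}f$ coming from \eqref{eq:CF}. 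The hard part will be that $\overline\sigma$ multiplies the value of $f$ on the \emph{left}, exactly on the side where $\overline\partial_{Q_l}=\sum_\beta\mathbf{i}_\beta\partial_{x_\beta}$ itself acts, so $\overline\sigma$ does not commute past the imaginary units $\mathbf{i}_\beta$. The main obstacle is therefore to reorganize the sum $\sum_{\beta}\mathbf{i}_\beta\,\overline\sigma\,\partial_{x_\beta}f$ so that the regularity of $f$ can be applied, which amounts to controlling the commutators $\mathbf{i}_\beta\overline\sigma-\overline\sigma\mathbf{i}_\beta$ generated after the substitution by means of the multiplication table \eqref{eq:q-sigma0}; this non-commutative book-keeping is where the argument is delicate. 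Once it is settled, \eqref{eq:D-sigma} delivers $\overline\partial_{q_l}\widetilde f\equiv 0$ for every $l$, and the corollary follows by combining the two cases.
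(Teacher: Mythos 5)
Your treatment of the first pull-back is fine: in \eqref{eq:D-A'} the constants $\overline{a}_{kj}$ multiply the quantities $\overline{\partial}_{Q_k}f$, which vanish by hypothesis, so $\overline{\partial}_{q_j}\widehat f\equiv 0$ regardless of any ordering issues. This is exactly how the statement follows from Proposition~\ref{prop:D-A}(1).

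The second half, however, contains a genuine gap, and it is not one that more careful bookkeeping can close: you defer the essential step (``once it is settled''), but that step is false. What your route through \eqref{eq:D-sigma} requires is that $\overline{\partial}_{Q_l}\big[\overline{\sigma}f(Q)\big]=0$ for every regular $f$, i.e.\ that \emph{left} multiplication by the constant $\overline{\sigma}$ preserves regularity. It does not, precisely because the Cauchy--Fueter operator acts from the left: the terms $\mathbf{i}_\beta\,\overline{\sigma}\,\partial_{x_\beta}f$ cannot be reassembled into $\overline{\sigma}\,\overline{\partial}_{Q_l}f$, and the commutators you propose to control genuinely survive. Concretely, take $n=1$, $f(q)=x_2-x_1\mathbf{i}$ (regular, since $\overline{\partial}_qf=-\mathbf{i}+\mathbf{i}\cdot 1=0$) and $\sigma=\mathbf{j}$. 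Then $q\mathbf{j}=-x_3-x_4\mathbf{i}+x_1\mathbf{j}+x_2\mathbf{k}$, so $\widetilde f(q)=f(q\mathbf{j})=-x_4+x_3\mathbf{i}$ and
\begin{equation*}
\overline{\partial}_q\widetilde f=\mathbf{j}\cdot\mathbf{i}+\mathbf{k}\cdot(-1)=-2\mathbf{k}\neq 0,
\end{equation*}
so $f(q\sigma)$ is not regular; no reorganization of the sum can prove otherwise. The only statement that does follow from Proposition~\ref{prop:D-A}(2) is the amended one that the paper actually proves and uses in Corollary~\ref{cor:transformations}: applying \eqref{eq:D-sigma} to $g=\sigma f$ gives
\begin{equation*}
\overline{\partial}_{q_l}\big[\sigma f(q\sigma)\big]
=\overline{\partial}_{Q_l}\big[\overline{\sigma}\sigma f(Q)\big]\big|_{Q=q\sigma}
=|\sigma|^2\,\overline{\partial}_{Q_l}f(Q)\big|_{Q=q\sigma}=0,
\end{equation*}
the point being that the compensating left factor $\sigma$ turns the obstruction $\overline{\sigma}$ into the real scalar $|\sigma|^2$, which commutes with everything. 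In other words, the difficulty you flagged is real, but the resolution is not to fight the non-commutativity; it is to replace the claim ``$f(q\sigma)$ is regular'' by ``$\sigma f(q\sigma)$ is regular'' (for $|\sigma|=1$, equivalently $f$ composed with $R_\sigma$ after left multiplication by $\sigma$), which is the form in which the invariance is needed and used in the rest of the paper.
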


\begin{cor}\label{cor:transformations}  The space of all regular functions on $\mathcal{U}_n$ is invariant under the transformations defined in Propositions~\ref{prop:transformations}.  Namely, if $f$ is regular on the   Siegel upper half space $\mathcal{U}_n$, then the functions
$f( \tau_p (q)) $, $p\in \partial\mathcal{U}_n$; $f( R_{\mathbf{a}}( q) )$,
$ \mathbf{a}\in {\rm Sp}(n-1)$;  $\sigma f( R_{ \sigma  }( q))$
 for some $\sigma\in \mathbb{H}$ with $|\sigma|=1$, and
$f( \delta_r( q) )$ are all regular on  $\mathcal{U}_n$.
\end{cor}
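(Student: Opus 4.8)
The plan is to realize each of the four maps of Proposition~\ref{prop:transformations} as a composition of three elementary operations whose effect on the Cauchy--Fueter operators~\eqref{eq:Cauchy-Fueter} is already under control: a constant translation $q\mapsto q+c$, a left multiplication $q\mapsto\mathbf{a}q$ by a matrix $\mathbf{a}\in{\rm GL}(n,\mathbb H)$, and a right multiplication $q\mapsto q\sigma$ by a fixed unit quaternion $\sigma$. Constant translations preserve regularity because the operators $\overline\partial_{q_l}$ have constant coefficients and hence commute with $q\mapsto q+c$; left multiplications are handled by formula~\eqref{eq:D-A'}, which gives $\overline\partial_{q_j}[f(\mathbf{a}q)]=\sum_k\overline a_{kj}(\overline\partial_{Q_k}f)(\mathbf{a}q)$, vanishing identically once $f$ is regular \emph{regardless} of the constants $\overline a_{kj}$; and right multiplications are governed by~\eqref{eq:D-sigma}.

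First I would dispose of the three ``untwisted'' cases. The dilation~\eqref{eq:dilations} is left multiplication by the real diagonal matrix ${\rm diag}(r^2,r,\dots,r)\in{\rm GL}(n,\mathbb H)$, and the rotation~\eqref{eq:rotations-1} is left multiplication by ${\rm diag}(1,\mathbf{a})$ with $\mathbf{a}\in{\rm Sp}(n-1)\subset{\rm GL}(n-1,\mathbb H)$; both are immediate from~\eqref{eq:D-A'}. The translate~\eqref{eq:translates} factors as $\tau_p(q)=\mathbf{a}_p q+p$, where $\mathbf{a}_p$ is the unipotent matrix with $(\mathbf{a}_p)_{1l}=2\overline p_l$ for $l\ge 2$ (so that $(\mathbf{a}_p q)_1=q_1+2\langle p',q'\rangle$ via~\eqref{eq:inner-product}) and identity in the remaining entries. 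Writing $g:=f(\,\cdot\,+p)$, regular by translation invariance, we obtain $f(\tau_p(q))=g(\mathbf{a}_p q)$, which is regular by~\eqref{eq:D-A'}.

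The main obstacle is the rotation $R_\sigma$ of~\eqref{eq:rotations-2}, and it is exactly here that the left factor $\sigma$ in the statement becomes indispensable. I would write $R_\sigma=\Phi\circ\Psi$, where $\Psi(q)=q\sigma=(q_1\sigma,\dots,q_n\sigma)$ is right multiplication on every coordinate and $\Phi$ is left multiplication by ${\rm diag}(\overline\sigma,1,\dots,1)\in{\rm GL}(n,\mathbb H)$, so that $\Phi(\Psi(q))=(\overline\sigma q_1\sigma,q_2\sigma,\dots,q_n\sigma)$. By~\eqref{eq:D-A'} the function $F:=f\circ\Phi$ is regular whenever $f$ is, and then $\sigma f(R_\sigma(q))=\sigma F(q\sigma)$. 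It therefore suffices to prove the twisted statement that $\sigma F(q\sigma)$ is regular for every regular $F$ and every unit $\sigma$. This is the crux: applying $\overline\partial_{q_l}$ and the chain rule as in the proof of Proposition~\ref{prop:D-A}, each term assembles into the combination $\big(\sum_\beta\widetilde{\sigma}^{\mathbb{R}}_{\gamma\beta}\mathbf{i}_\beta\big)\sigma=\mathbf{i}_\gamma\overline\sigma\sigma=\mathbf{i}_\gamma$, where the cancellation $\overline\sigma\sigma=|\sigma|^2=1$ is precisely what the hypothesis $|\sigma|=1$ supplies; summing over $\gamma$ reconstitutes $\overline\partial_{Q_l}F$, which vanishes, so $\overline\partial_{q_l}[\sigma F(q\sigma)]=0$ for every $l$.

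I expect the only genuine subtlety to be the bookkeeping in this last computation. Without the left multiplier $\sigma$ one is left instead with the untwisted combination $\sum_\gamma\mathbf{i}_\gamma\overline\sigma\,\partial_{Q_{l,\gamma}}F$, which is nonzero in general, so the regularity of $f(R_\sigma(q))$ really does fail and the twist cannot be dropped; the role of $|\sigma|=1$ is exactly to make $\sigma$ restore $\mathbf{i}_\gamma$ from $\mathbf{i}_\gamma\overline\sigma$. Once this twisted right-multiplication statement is in hand, all four assertions of Corollary~\ref{cor:transformations} follow by assembling the elementary factors described above.
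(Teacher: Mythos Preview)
Your proposal is correct and follows essentially the same strategy as the paper: factor each transformation into a $\mathrm{GL}(n,\mathbb H)$ left multiplication, a right multiplication by a unit quaternion, and a constant translation, then invoke Proposition~\ref{prop:D-A}. The only cosmetic difference is in the handling of $R_\sigma$: the paper applies~\eqref{eq:D-sigma} directly to the function $\sigma f$ (so that the prefactor $\overline\sigma$ produced by the formula cancels against the inserted $\sigma$, giving $\overline\partial_{q_l}[\sigma f(q\sigma)]=|\sigma|^2\,\overline\partial_{Q_l}f|_{Q=q\sigma}=0$), whereas you re-derive this same identity from scratch via the matrix $\widetilde\sigma^{\mathbb R}$; your explicit decomposition $R_\sigma=\Phi\circ\Psi$ with $\Phi=\mathrm{diag}(\overline\sigma,1,\dots,1)$ is left implicit in the paper but is indeed the missing link between the displayed identity there and the stated conclusion about $\sigma f(\overline\sigma q_1\sigma,q'\sigma)$.
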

\begin{proof}

The translate $\tau_p$ in (\ref{eq:translates}) can be represented as a composition of the linear transformation given by the quaternionic matrix
\begin{equation*}
     \left[\begin{array}{cc}1&2\overline{p}'\\0&I_{n-1}\end{array}\right],
\end{equation*}
and the Euclidean translate $(q_1,q')\mapsto (q_1+p_1,q'+p')$. The first transformation preserves the regularity of a function by Proposition \ref{prop:D-A}, while the later one   obviously preserves the regularity of a function since
the  Cauchy-Fueter operators are of constant coefficients.

The equation
\begin{equation*}
     \overline{\partial}_{q_l}[\sigma f(q\sigma)]=   \overline{\partial}_{Q_l}[\overline{\sigma}\sigma f(Q)]_{Q=q\sigma}= |\sigma|^2 \overline{\partial}_{Q_l}  f(Q)|_{Q=q\sigma}=0,
\end{equation*}
follows from Proposition~\ref{prop:D-A} (2) and shows that $\sigma f(\overline{\sigma} q_1\sigma,q'\sigma)$ is regular.

The rest of the corollary is obvious.
\end{proof}


\section{\bf Hardy space $H^2(\mathcal U_n)$}\label{sec:Hardy}


This section is devoted to the     properties of Hardy space on $\mathcal U_n$.
The identification of the quaternionic Heisenberg group and the boundary of the quaternionic Siegel upper half space allows to define the Lebesgue measure $d\beta (\cdot)$ on $\partial \mathcal U_n$ by pulling back by the  projection $\pi$~\eqref{eq:proj} the Haar measure on $qH^{n-1}$. The latter measure, in its term, is a pull back of the Lebesgue measure $d\mu(\cdot)=dx\,dq'$ from $\mathbb R^3\times\mathbb R^{4(n-1)}$. Let $ L^2(\partial\mathcal{U}_n )$ denote the  space of all  $\mathbb{H}$-valued functions which are square integrable with respect to the measure $d\beta$. It is easy to see by definition that $ L^2(\partial\mathcal{U}_n )$ is a
right quaternionic Hilbert space with the following inner product:
\begin{equation} \langle f,g\rangle_{L^2}=\int_{\partial\mathcal{U}_n }\overline{f(q)}g(q)d\beta(q).
\end{equation}

A function $F\in H^2(\mathcal{U}_n )$ has boundary value $F^b$ that belongs to $L^2(\partial\mathcal U)$ in the following sense.

\begin{thm}\label{thm:boundary_value} Suppose that $F\in H^2(\mathcal U_n)$. Then
\begin{itemize}
\item[1.]{There exists a function $F^b\in L^2(\partial\mathcal U_n)$ such that $F(q+\varepsilon \mathbf e)\vert_{\partial\mathcal U_n}\to F^b(q)$ as $\varepsilon\to 0$ in $L^2(\partial\mathcal U_n)$ norm.}
\item[2.]{$\|F^b\|_{L^2(\partial\mathcal U_n)}=\|F\|_{H^2(\mathcal U_n)}$,}
\item[3.]{The space of all boundary values forms a closed subspace of the space $L^2(\partial\mathcal U_n)$.}
\end{itemize}
\end{thm}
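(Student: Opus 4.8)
The plan is to prove all three statements at once by showing that the family of vertical translates $\{F_\varepsilon\}_{\varepsilon>0}$ converges in $L^2(\partial\mathcal{U}_n)$ as $\varepsilon\to 0$. The starting observation is that a regular $F$ is harmonic componentwise: applying the conjugated Cauchy-Fueter operator to \eqref{eq:CF} factors the Laplacian, so $\Delta F_a=0$ for each real component $F_a$, and consequently $|F|^2=\sum_a F_a^2$ is subharmonic on $\mathcal{U}_n$. The definition \eqref{eq:def-H2} says precisely that $A(\varepsilon):=\int_{\partial\mathcal{U}_n}|F_\varepsilon|^2\,d\beta$ is bounded in $\varepsilon$, and I note that vertical translation is $d\beta$-preserving because each slice $\partial\mathcal{U}_n+\varepsilon\mathbf{e}$ is carried onto itself by the translates $\tau_p$ (Proposition~\ref{prop:transformations}) and onto the group $qH^{n-1}$ by the projection $\pi$.

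The structural heart of the argument is a Poisson-type semigroup on the quaternionic Heisenberg group. I would establish a representation
\[
F_{\varepsilon+\delta}=\mathcal{P}_\delta * F_\varepsilon,\qquad \varepsilon,\delta>0,
\]
where $*$ is convolution on $qH^{n-1}$ and $\{\mathcal{P}_\delta\}_{\delta>0}$ is a contractive approximate identity on $L^2(\partial\mathcal{U}_n)$, so that $\|\mathcal{P}_\delta * g\|_{L^2}\le\|g\|_{L^2}$ and $\mathcal{P}_\delta * g\to g$ as $\delta\to 0$. Contractivity immediately gives that $A(\varepsilon)$ is non-increasing, hence $\sup_{\varepsilon>0}A(\varepsilon)=\lim_{\varepsilon\to0^+}A(\varepsilon)$. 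Boundedness of $A$ furnishes, by weak compactness of bounded sets in the Hilbert space $L^2(\partial\mathcal{U}_n)$, a sequence $\varepsilon_j\to0$ with $F_{\varepsilon_j}\rightharpoonup F^b$ for some $F^b\in L^2(\partial\mathcal{U}_n)$. Since convolution with the fixed kernel $\mathcal{P}_\delta$ is weakly continuous, passing to the limit in the semigroup relation yields $F_\delta=\mathcal{P}_\delta * F^b$; the approximate-identity property then gives $\|F_\delta-F^b\|_{L^2}\to 0$ as $\delta\to0$, which is statement 1 and upgrades the subsequence to the full family. Statement 2 follows from the same two properties: $\|F_\delta\|_{L^2}=\|\mathcal{P}_\delta*F^b\|_{L^2}\le\|F^b\|_{L^2}$ while $\|F_\delta\|_{L^2}\to\|F^b\|_{L^2}$, so $\sup_\delta\|F_\delta\|_{L^2}=\|F^b\|_{L^2}$, that is $\|F\|_{H^2(\mathcal{U}_n)}=\|F^b\|_{L^2}$.

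For statement 3, the map $\iota\colon F\mapsto F^b$ is by statement 2 an isometry of $H^2(\mathcal{U}_n)$ into $L^2(\partial\mathcal{U}_n)$, so it suffices to know that $H^2(\mathcal{U}_n)$ is complete. This I would obtain from a mean value estimate: subharmonicity of $|F|^2$ gives, for $q$ at height $t$ over $\partial\mathcal{U}_n$, a bound $|F(q)|^2\le C_t\,A(t/2)$ by averaging over a ball contained in $\mathcal{U}_n$, so an $H^2$-Cauchy sequence converges locally uniformly on $\mathcal{U}_n$ to a regular limit whose $H^2$-norm is controlled by Fatou's lemma; hence $\iota(H^2(\mathcal{U}_n))$ is complete and therefore closed in $L^2(\partial\mathcal{U}_n)$. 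Equivalently, one may reconstruct the interior extension of a limiting boundary datum through $\mathcal{P}_\delta$ and check regularity and membership in $H^2$ directly.

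I expect the main obstacle to be constructing the Poisson semigroup $\{\mathcal{P}_\delta\}$ and proving the representation $F_{\varepsilon+\delta}=\mathcal{P}_\delta*F_\varepsilon$ in the quaternionic Siegel geometry. Unlike the half-space model, the slices $\partial\mathcal{U}_n+\varepsilon\mathbf{e}$ are paraboloids and the group $qH^{n-1}$ is noncommutative, so the kernel cannot be read off by elementary means; the natural route is the group Fourier transform on $qH^{n-1}$, under which the tangential regularity conditions diagonalize the vertical flow into multiplication by an explicit contraction and exhibit $\mathcal{P}_\delta$ as an approximate identity. The noncommutativity of $\mathbb{H}$ and the $\mathbb{H}$-valuedness of $F$ force one to carry the subharmonicity and contraction estimates out componentwise, via $|F|^2=\sum_a F_a^2$, to keep them scalar. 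Alternatively, the monotonicity of $A(\varepsilon)$ alone can be obtained more cheaply from Green's identity applied to the subharmonic function $|F|^2$ on the slab between two slices, but recovering the strong convergence in statement 1 still appears to require the semigroup or an equivalent approximate-identity input.
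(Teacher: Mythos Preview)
Your overall architecture---boundedness, weak limits, a semigroup/approximate-identity to upgrade to strong convergence, then isometry plus completeness of $H^2(\mathcal{U}_n)$ for closedness---is sound and would yield the theorem. The paper, however, takes a much lighter route: it \emph{slices}. Since $\partial_{q_1}\overline{\partial}_{q_1}$ is the four-variable Laplacian in the $q_1$-coordinates alone (see (\ref{eq:pluriharmonicity-hat})), each real component of the function $f_{q'}(q_1):=F(q_1+|q'|^2,q')$ is harmonic on the half-space $\mathbb{R}^4_+=\{\re q_1>0\}$ for every fixed $q'$, and Fubini plus (\ref{eq:def-H2}) shows $f_{q'}\in\mathcal{H}^2(\mathbb{R}^4_+)$ for a.e.\ $q'$. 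One then invokes the classical harmonic Hardy theory on $\mathbb{R}^4_+$ slice by slice and reassembles via Fubini; this was carried out in detail for $n=2$ in~\cite{CM08}, and the paper simply notes that the argument is dimension-independent.

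The practical difference is that the ``main obstacle'' you anticipate---building a Poisson-type semigroup on $qH^{n-1}$ via the group Fourier transform---evaporates under slicing. The semigroup you want is nothing more exotic than the classical Poisson kernel on $\mathbb{R}^4_+$ acting in the $q_1$-variable, which on the Heisenberg side is convolution against a kernel concentrated at $q'=0$ (hence Euclidean convolution in the central variables $\im q_1$). No noncommutative harmonic analysis is needed because harmonicity in $q_1$ \emph{alone} already gives the representation $F_{\varepsilon+\delta}=P_\delta*F_\varepsilon$ with $P_\delta$ the ordinary $\mathbb{R}^3$-Poisson kernel. Your approach becomes correct and short once you make this observation; as written it is not wrong, but it overestimates the difficulty. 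Your argument for part~3 via completeness of $H^2(\mathcal{U}_n)$ matches what the paper does in Proposition~\ref{prop:Hardy}.
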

\begin{proof}
This theorem was proved in~\cite[Theorem 4.2]{CM08} for $n=2$. The arguments work for an arbitrary $n$ if we consider the following slice functions. Let $\mathcal H^2(\mathbb R^4_+)$ be the classical Hardy space, that is the set of all harmonic functions $u\colon \mathbb R^4_+\to\mathbb R$ such that
$$
\sup_{t>0}\|u(t,\cdot)\|_{L^2(\mathbb R^3)}<\infty.
$$
Assume that $F=F_1+\bfi F_2+\bfj F_3+\bfk F_4\in H^2(\mathcal U_n)$. Then the slice function $f_j(q_1):=F_j(q_1+|q'|^2,q')$ is
harmonic by (\ref{eq:pluriharmonicity-hat}), and belongs to $\mathcal H^2(\mathbb R^4_+)$ for each $j=1,\ldots,4$ and any fixed $q'\in\bH^{n-1}$. We omit further details.
\end{proof}

\begin{prop} \label{prop:Hardy} The Hardy space $H^2(\mathcal{U}_n )$ is
a right quaternionic  Hilbert space  under the inner product $\langle F,G\rangle=\langle F^b,G^b\rangle_{L^2(\partial\mathcal{U}_n )}$.
\end{prop}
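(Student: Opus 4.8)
The plan is to verify the three defining features of a right quaternionic Hilbert space in turn: that $H^2(\mathcal{U}_n)$ is a right quaternionic vector space, that the proposed pairing is a positive definite hyperhermitian semilinear form, and that the associated distance is complete. The whole argument rests on Theorem~\ref{thm:boundary_value}, which furnishes the boundary-value map $F\mapsto F^b$ and asserts that it is norm preserving, $\|F^b\|_{L^2(\partial\mathcal{U}_n)}=\|F\|_{H^2(\mathcal{U}_n)}$, with closed image inside the complete right quaternionic Hilbert space $L^2(\partial\mathcal{U}_n)$. Once this map is recognised as an isometric embedding onto a closed subspace, all the structure of $H^2(\mathcal{U}_n)$ is transported from $L^2(\partial\mathcal{U}_n)$.

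First I would check the vector space structure. Closure under addition is immediate from Minkowski's inequality applied to the defining supremum in \eqref{eq:def-H2}, together with the linearity of the Cauchy--Fueter operators \eqref{eq:CF}. For right scalar multiplication by $\sigma\in\mathbb{H}$, regularity is preserved because, by associativity of the quaternion product, $\overline{\partial}_{q_l}(F\sigma)=(\overline{\partial}_{q_l}F)\sigma=0$, while $|F_\varepsilon\sigma|=|F_\varepsilon|\,|\sigma|$ shows $F\sigma\in H^2(\mathcal{U}_n)$; this is exactly the reason the \emph{right} quaternionic structure is the natural one, since left multiplication by $\sigma$ would not commute with the units $\mathbf{i},\mathbf{j},\mathbf{k}$ appearing in $\overline{\partial}_{q_l}$. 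The axioms (1)--(4) of a right quaternionic vector space then follow pointwise.

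Next I would transfer the hyperhermitian semilinear axioms from the $L^2$ pairing. Since right multiplication by a fixed $\sigma$ is continuous, the $L^2$-limit defining the boundary value gives $(F\sigma)^b=F^b\sigma$, whence $\langle F,G\sigma\rangle=\langle F^b,G^b\sigma\rangle_{L^2}=\langle F^b,G^b\rangle_{L^2}\sigma=\langle F,G\rangle\sigma$ and $\overline{\langle G,F\rangle}=\langle F,G\rangle$ follow directly from the corresponding properties of $\langle\cdot,\cdot\rangle_{L^2}$, while additivity in each slot is clear. Positive definiteness is where Theorem~\ref{thm:boundary_value} enters decisively: $\langle F,F\rangle=\|F^b\|_{L^2}^2=\|F\|_{H^2}^2\ge 0$, and if this vanishes then $F_\varepsilon\equiv 0$ on $\partial\mathcal{U}_n$ for every $\varepsilon>0$, i.e. $F$ vanishes on each translated hypersurface $\partial\mathcal{U}_n+\varepsilon\mathbf{e}$; since these foliate $\mathcal{U}_n$, we get $F\equiv 0$. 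In particular $F\mapsto F^b$ is injective.

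The main point, and the step I expect to require the most care, is completeness. Given a Cauchy sequence $\{F_k\}$ in $H^2(\mathcal{U}_n)$, the isometry makes $\{F_k^b\}$ Cauchy in $L^2(\partial\mathcal{U}_n)$; by completeness of the latter it converges to some $g$, and by part~3 of Theorem~\ref{thm:boundary_value} the limit $g$ again lies in the closed space of boundary values, say $g=G^b$ with $G\in H^2(\mathcal{U}_n)$. Then $\|F_k-G\|_{H^2}=\|F_k^b-G^b\|_{L^2}\to 0$, so $F_k\to G$ in $H^2(\mathcal{U}_n)$. The delicate issues to pin down are precisely the identity $(F\sigma)^b=F^b\sigma$ and the surjectivity of $F\mapsto F^b$ onto the closed boundary-value subspace, since without the latter the candidate limit $g$ need not be realised by an actual element of $H^2(\mathcal{U}_n)$; both are supplied by Theorem~\ref{thm:boundary_value}, after which completeness is a formal consequence of the isometric identification.
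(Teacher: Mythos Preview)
Your argument is correct but takes a different route from the paper. You transport the Hilbert-space structure from $L^2(\partial\mathcal{U}_n)$ through the boundary-value map of Theorem~\ref{thm:boundary_value}: once $F\mapsto F^b$ is an injective, right-$\mathbb{H}$-linear isometry onto a closed subspace, completeness is a formality. The paper instead proves completeness directly from interior estimates. It first notes that every regular function is harmonic in each variable (since $\partial_{q_l}\overline{\partial}_{q_l}$ is the Laplacian), then uses the mean-value property together with the foliation of $\mathcal{U}_n$ by translates of $\partial\mathcal{U}_n$ to derive the pointwise bound
\[
|F(q)|\le c(q)\,\|F\|_{H^2(\mathcal{U}_n)},
\]
with $c(q)$ independent of $F$. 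This forces any $H^2$-Cauchy sequence to converge locally uniformly; standard $C^1$ estimates for harmonic functions then show the limit is again regular, and a Fatou-type argument places it back in $H^2(\mathcal{U}_n)$.

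Your approach is cleaner and more structural, but the paper's direct argument yields as a byproduct the pointwise estimate displayed above, and this is precisely what is invoked in the proof of Theorem~\ref{thm:CM08} to show that point evaluation $F\mapsto F(q)$ is a bounded right-linear functional on $H^2(\mathcal{U}_n)$, so that Riesz representation produces the Cauchy--Szeg\"o kernel. If you follow your route you will still need to supply that estimate separately before the kernel construction can proceed.
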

\begin{proof}
Since  the Cauchy-Fueter operator $\overline\partial_{{q}_{l  }}$ in (\ref{eq:Cauchy-Fueter}) is right quaternionic linear, i.e., for a fixed $\sigma$
 \begin{equation*}
     \overline\partial_{{q}_{l  }}(f(q)\sigma)=(\overline\partial_{{q}_{l  }} f(q))\sigma,
 \end{equation*}
we see that
   $f(q)\sigma$  is regular if $f(q) $ is. Thus, the Hardy space $H^2(\mathcal{U}_n )$ is
a right quaternionic vector space.

 Set
\begin{equation}\label{eq:Cauchy-Fueter--}
  \partial_{{q}_{l+1 }} f:= \overline{\overline\partial_{{q}_{l+1 }} \overline{f}}=\partial_{x_{4l+1}}f-
 \partial_{x_{4l+2}}f\mathbf{i}
 - \partial_{x_{4l+3}}f\mathbf{j}- \partial_{x_{4l+4}}f\mathbf{k}.
\end{equation}
It is straightforward to see that
\begin{equation}\label{eq:pluriharmonicity-hat}
    0= {\partial}_{q_{l+1 } }\overline{\partial}_{q_{l+1 }} {f}= (\partial_{x_{4l+1}}^2+\partial_{x_{4l+2}}^2+
 \partial_{x_{4l+3}}^2+
\partial_{x_{4l+4}}^2)  {f} .
\end{equation}
  Consequently, $ {f}_1,\ldots,  {f}_4$  are harmonic on the line $\{(0,\dots q_l,\dots,0)\}$ and so  $f_1,\ldots, f_4$  are harmonic on  $\mathbb H^n$. Thus for $q\in \mathcal{U}_n$,
  \begin{equation*}\label{eq:mean}
      f_j(q)=\frac 1{|B|}\int_B  f_j(p) dV(p),\qquad j=1,2,3,4,
  \end{equation*}
  where $B$ is a small ball centered at $q$ and contained in $\mathcal{U}_n$, from which we see that
   \begin{equation}\label{eq:mean-||}
     | f (q)| \leq \frac 1{|B|}\int_B  |f (p)| dV(p)\leq \left(\frac 1{|B|}\int_B  |f (p)|^2 dV(p)\right)^{\frac 12}.
  \end{equation}
There exist $a,b>0$ such that $B\subset\mathcal{U}_{n;a,b}:=\{q\in \mathcal{U}_n\vert\ a < \re q_1-|q'|^2< b\}$, and so
\begin{equation} \label{eq:estimate}
\begin{split}
     | f (q)|^2 &\leq \frac 1{|B|}\int_{\mathcal{U}_{n;a,b}}  |f (x_1 ,\cdots ,  x_{4n} )|^2 dx_1\cdots dx_{4n}\\&\leq \frac 1{|B|}\int_{(a,b)\times \mathbb{R}^{4n-1}}  \left|f\left(x_1+\sum_{j=5}^{4n}|x_j|^2,x_2\cdots ,  x_{4n}\right)\right|^2 dx_1dx_2\cdots dx_{4n} \\
    & \leq \frac 1{|B|}\int_a^bdx_1\int_{\partial\mathcal{U}_{n } } |f  (p+ x_1  \mathbf{e})|^2 d\beta(p)
     \leq c \|f\|_{H^2(\mathcal U_n)}^2,
\end{split}  \end{equation}
  where $c=(b-a)/{|B|}$ is a positive constant depending on $q$, and independent of the functions $f\in H^2(\mathcal{U}_n )$. Here  we have used the coordinates transformation $(x_1 ,\cdots ,  x_{4n} )\rightarrow(x_1+\sum_{j=5}^{4n}|x_j|^2,x_2\cdots ,  x_{4n}) $, whose  Jacobian is identity.

To prove the completeness, we suppose that a Cauchy sequence $\{f^{(k)}\}$ in the Hardy space $H^2(\mathcal{U}_n )$ is given. We need to show that some subsequence converges
to an element in $H^2(\mathcal{U}_n )$. Apply the estimate (\ref{eq:estimate}) to regular functions $f^{(k)}  -f^{(l)}$ to get that for any compact subset $K\subset\mathcal U_n$ and  $q\in K$,
\begin{equation} \begin{split}
     | f^{(k)} (q)-f^{(l)}(q)|
     \leq c_K \|f^{(k)}  -f^{(l)}\|_{H^2(\mathcal U_n)},
\end{split}  \end{equation}
 where $c_K$ is a positive constant only depending on $K$. It means that the sequence $\{f^{(k)}\}$ converges uniformly on any compact subset of  $\mathcal{U}_n $. Denote by  $f$   the limit. Recall the well known estimate
 \begin{equation}\label{eq:C1-estimate}
     \|u\|_{C^1(B(q,r))}\leq C_r\|u\|_{C^0(B(q,2r))}
 \end{equation}
for any harmonic function $u$ defined on the ball $B(q,2r)$,  where $C_r$ is a positive constant only depending on $r$ and the dimension, and independent of a function $u$ (see {\it e.g.,} pp. 307-312 in \cite{Stein-F}).
Now apply the estimate~\eqref{eq:C1-estimate} to each component of regular function $f=f_1+
 \mathbf{i}f_2
 + \mathbf{j}f_3+ \mathbf{k}f_4$, which is harmonic. By the argument of finite covering and estimate (\ref{eq:estimate}), we easily see that
  \begin{equation*}
     \|f\|_{C^1(K)}\leq C_K'\|f\|_{H^2(\mathcal{U}_n)}
 \end{equation*}
 for some constant $C_K'$ only depending on $K$. It follows that $ |\partial_{x_j} f^{(k)} (q)-\partial_{x_j} f^{(k)}(q)|
     \leq C_K' \|f^{(k)}  -f^{(l)}\|_{H^2(\mathcal U_n)}$ for $q\in K$, $j=1,\ldots 4n$. Consequently, the limit function $f$ is also $C^1$ and  $\lim_{k\rightarrow\infty}\partial_{x_j} f^{(k)}(q)=\partial_{x_j}f(q) $. Thus, $\overline\partial_{{q}_{l  }} f(q)
     =\lim_{k\rightarrow\infty}\overline\partial_{{q}_{l  }}  f^{(k)}(q)=0 $. Namely, the limit function $f$ is regular.

     Since on the compact subset $K_{R,\varepsilon}:=\partial\mathcal{U}_n \cap  \overline{ B(0,R)}+\varepsilon\mathbf{e}$ for fixed $R,\varepsilon>0$, the sequence $\{f^{(k)}\}$ is uniformly convergent, we find that
     \begin{equation*}
     \begin{split}
     \int_{\partial\mathcal{U}_n \cap   \overline{B(0,R)} }|f_\varepsilon(q)|^2d\beta(q)&=\int_{K_{R,\varepsilon}}|f(q)|^2d\beta(q)
     \\&=
          \lim_{k\rightarrow\infty}   \int_{K_{R,\varepsilon}}|f^{(k)}(q)|^2d\beta(q)\leq \sup_k\|f^{(k)}\|_{H^2(\mathcal{U}_n)}<\infty.
\end{split}    \end{equation*}
  Consequently, $f_\varepsilon$ is square integrable on $\partial\mathcal{U}_n$ for any $\varepsilon>0$, and  $ \int_{ \partial\mathcal{U}_n  }|f_\varepsilon(q)|^2d\beta(q) \leq \sup\|f^{(k)}\|_{H^2(\mathcal{U}_n)}$. Thus $f\in H^2(\mathcal{U}_n )$.
\end{proof}

\begin{prop}\label{pr:Hardy_invariance} The Hardy space
$H^2(\mathcal{U}_n )$ is invariant under the transformations of Proposition~\ref{prop:transformations}.
\end{prop}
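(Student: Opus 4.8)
The plan is to reduce everything to the defining integrability condition~\eqref{eq:def-H2}, since Corollary~\ref{cor:transformations} already guarantees that each of the transformed functions remains regular on $\mathcal U_n$. Thus for each transformation $T$ from Proposition~\ref{prop:transformations} it suffices to check that the transformed function still has finite $H^2$ norm, and the only real work is to track how the vertical translate $q\mapsto q+\varepsilon\mathbf e$ and the measure $d\beta$ behave under $T$.

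First I would record the key observation that each transformation interacts simply with the vertical direction $\mathbf e=(1,0,\dots,0)$. A direct computation using~\eqref{eq:translates}, \eqref{eq:rotations-1} and~\eqref{eq:rotations-2} shows that $\tau_p$, $R_{\mathbf a}$ and $R_\sigma$ all commute with the vertical translation, i.e. $T(q+\varepsilon\mathbf e)=T(q)+\varepsilon\mathbf e$; here for $R_\sigma$ one uses that $\overline\sigma\varepsilon\sigma=\varepsilon|\sigma|^2=\varepsilon$ because $\varepsilon\in\mathbb R$ and $|\sigma|=1$. For the dilation one finds instead $\delta_r(q+\varepsilon\mathbf e)=\delta_r(q)+r^2\varepsilon\mathbf e$. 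Consequently, writing $G$ for the transformed function (for $R_\sigma$ one takes $G(q)=\sigma F(R_\sigma(q))$, as dictated by Corollary~\ref{cor:transformations}), on $\partial\mathcal U_n$ one has $G_\varepsilon=F_\varepsilon\circ T$ in the first three cases and $G_\varepsilon=F_{r^2\varepsilon}\circ\delta_r$ in the dilation case. The factor $\sigma$ is invisible to $|G_\varepsilon|$ since $|\sigma|=1$.

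Next I would use the behaviour of $d\beta$. Identifying $\partial\mathcal U_n$ with $qH^{n-1}$ via $\pi$, the measure $d\beta$ corresponds to $dx\,dq'$ on $\mathbb R^3\times\mathbb R^{4(n-1)}$. Under this identification $\tau_p$ is the left translation $p\cdot(\cdot)$ of the group, hence preserves $d\beta$ by invariance of the Haar measure; $R_{\mathbf a}$ acts as the orthogonal map $\mathbf a\in{\rm Sp}(n-1)$ on the $q'$-variable and fixes the central $\mathbb R^3$; and $R_\sigma$ acts by the rotation $q_1\mapsto\overline\sigma q_1\sigma$ (an element of ${\rm SO}(3)$ on $\im\mathbb H$, fixing $\re q_1$) together with the right multiplication $q'\mapsto q'\sigma$ by a unit quaternion, which is orthogonal on $\mathbb H^{n-1}$. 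Hence all three preserve $d\beta$, and a change of variables in~\eqref{eq:def-H2} gives $\int_{\partial\mathcal U_n}|G_\varepsilon|^2\,d\beta=\int_{\partial\mathcal U_n}|F_\varepsilon|^2\,d\beta$ for every $\varepsilon>0$; taking the supremum yields $\|G\|_{H^2}=\|F\|_{H^2}$, so these three maps are isometries of $H^2(\mathcal U_n)$.

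The dilation is the one point that needs care, and it is the main obstacle. Here $\delta_r$ acts on the Heisenberg coordinates as $(x,q')\mapsto(r^2x,rq')$, whose Jacobian on $\mathbb R^3\times\mathbb R^{4(n-1)}$ is $r^{6}\cdot r^{4(n-1)}=r^{4n+2}$, so that substituting $p=\delta_r q$ gives $d\beta(p)=r^{4n+2}\,d\beta(q)$. Combining this with $G_\varepsilon=F_{r^2\varepsilon}\circ\delta_r$ on $\partial\mathcal U_n$ yields
\begin{equation*}
\int_{\partial\mathcal U_n}|G_\varepsilon(q)|^2\,d\beta(q)=r^{-(4n+2)}\int_{\partial\mathcal U_n}|F_{r^2\varepsilon}(p)|^2\,d\beta(p).
\end{equation*}
Taking the supremum over $\varepsilon>0$ and noting that $\varepsilon\mapsto r^2\varepsilon$ is a bijection of $(0,\infty)$, I conclude $\|G\|_{H^2}^2=r^{-(4n+2)}\|F\|_{H^2}^2<\infty$, so $\delta_r$ maps $H^2(\mathcal U_n)$ into itself, bijectively, with inverse induced by $\delta_{1/r}$. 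The only subtlety is to keep the reparametrization $\varepsilon\mapsto r^2\varepsilon$ matched with the Jacobian factor $r^{4n+2}$ correctly under the supremum; once this is done the invariance under all the transformations of Proposition~\ref{prop:transformations} follows.
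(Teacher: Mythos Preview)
Your proof is correct and follows essentially the same approach as the paper's own proof: both invoke Corollary~\ref{cor:transformations} for the preservation of regularity, and then reduce the membership in $H^2(\mathcal U_n)$ to the behaviour of the measure $d\beta$ and of the vertical slices $\partial\mathcal U_n+\varepsilon\mathbf e$ under each of the four transformations. The paper's proof is a one-sentence sketch (``the measure $d\beta$ is either invariant or has a finite distortion''), whereas you have spelled out explicitly the commutation with vertical translation, the orthogonality arguments for $\tau_p$, $R_{\mathbf a}$, $R_\sigma$, and the Jacobian computation $r^{4n+2}$ for the dilation; your treatment of the dilation, including the reparametrization $\varepsilon\mapsto r^2\varepsilon$ in the supremum, is exactly the ``finite distortion'' the paper alludes to.
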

\begin{proof}
Since the regularity property  and  the hypersurface   $\partial\mathcal U_n+\varepsilon\mathbf{e}$ for each $\varepsilon>0$ are invariant under these transformations by Corollary~\ref{cor:transformations} and the measure $d\beta$ either invariant or has a finite distortion, the proof follows.
\end{proof}


\section{\bf The Cauchy-Szeg\"o kernel}\label{sec:kernel}


In this section we introduce the notion of the Cauchy-Szeg\"o kernel for the projection operator from  the space $L^2(\partial \mathcal U)$ to the space of the boundary values of function from Hardy space $H^2(\mathcal U_n)$. We study its properties, particularly showing that it is invariant under translates, rotations and dilations defined in Proposition~\ref{prop:transformations} and, finally, we present the formula for the Cauchy-Szeg\"o kernel.


\subsection{Existence and characterization of the Cauchy-Szeg\"o kernel}


\begin{thm}\label{thm:CM08}  The Cauchy-Szeg\"o
kernel $S(q, p)$ is a unique $\mathbb H$-valued function, defined on $\mathcal{U}_n  \times \mathcal{U}_n$  satisfying the following
conditions.
\begin{itemize}
\item[1.]{ For each $p\in \mathcal{U}_n$, the function $q \mapsto S(q, p)$ is regular for $q\in \mathcal{U}_n$, and belongs
to $H^2(\mathcal{U}_n)$. This allows to define the boundary value $S^b(q, p)$ for each $p\in \mathcal{U}_n$ and for
almost all $q\in \partial\mathcal{U}_n$.}
\item[2.]{ The kernel $S$ is symmetric: $S(q, p) = \overline{S(p, q)}$ for each $(q, p)\in\mathcal{U}_n  \times \mathcal{U}_n$. The symmetry
permits to extend the definition of $S(q ,p )$ so that for each $q\in \mathcal{U}_n$, the function
$S_b(q, p)$ is defined for almost every $p\in \partial\mathcal{U}_n$ $($here we use the subscript  $b$ to indicate
the boundary value with respect to the second argument$)$.}
\item[3.]{ The kernel $S$ satisfies the reproducing property in the following sense
 \begin{equation}\label{eq:Szego-2}
    F(q) =
\int_{\partial\mathcal{U}_n}
S_b(q, Q)F^b(Q) d\beta (Q),  \qquad q\in \mathcal{U}_n,
 \end{equation}
whenever $F \in   H^2(\mathcal{U}_n)$.
}
\end{itemize}
\end{thm}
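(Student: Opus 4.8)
The plan is to identify $S$ with the reproducing kernel of the right quaternionic Hilbert space $H^2(\mathcal{U}_n)$ and to obtain its existence from the quaternion version of Riesz's representation theorem proved in Section~\ref{sec:quaternion}. First I would fix $q\in\mathcal U_n$ and study the evaluation functional $\mathrm{ev}_q\colon F\mapsto F(q)$ on $H^2(\mathcal U_n)$. It is additive and right quaternionic linear, since $(F\sigma)(q)=F(q)\sigma$, and it is bounded because the interior mean-value estimate~\eqref{eq:estimate} gives $|F(q)|\le c_q\|F\|_{H^2(\mathcal U_n)}$ with $c_q$ depending only on $q$. The quaternion Riesz theorem then produces a unique $K_q\in H^2(\mathcal U_n)$ with
\[
F(q)=\langle K_q,F\rangle_{H^2}=\int_{\partial\mathcal U_n}\overline{K_q^b(Q)}\,F^b(Q)\,d\beta(Q),
\]
where the second equality uses the description of the $H^2$ inner product through boundary values in Proposition~\ref{prop:Hardy}, the boundary values existing by Theorem~\ref{thm:boundary_value}.

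Next I would prove the symmetry $K_p(q)=\overline{K_q(p)}$ of the representers: applying the reproducing identity to $F=K_q$ at the point $p$ gives $K_q(p)=\langle K_p,K_q\rangle$, and property~(3) of the hyperhermitian form yields $K_p(q)=\langle K_q,K_p\rangle=\overline{\langle K_p,K_q\rangle}=\overline{K_q(p)}$. With this in hand I would define the Cauchy-Szeg\"o kernel by $S(q,p):=\overline{K_q(p)}$, so that equivalently $S(q,p)=K_p(q)$. Property~1 is then immediate, because $S(\cdot,p)=K_p$ lies in $H^2(\mathcal U_n)$ and is therefore regular with a well-defined boundary value $S^b(\cdot,p)=K_p^b$. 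Property~2 is exactly the symmetry just established, and it is precisely this relation that lets one define the second-argument boundary value by $S_b(q,p):=\overline{S^b(p,q)}=\overline{K_q^b(p)}$. Property~3 is then the Riesz identity rewritten with $S_b(q,Q)=\overline{K_q^b(Q)}$.

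For uniqueness I would argue through the uniqueness in the Riesz theorem. If $\widetilde S$ is any kernel satisfying 1--3, then for fixed $q$ property~2 gives $\widetilde S_b(q,Q)=\overline{\widetilde S^b(Q,q)}$, so property~3 reads
\[
F(q)=\int_{\partial\mathcal U_n}\overline{\widetilde S^b(Q,q)}\,F^b(Q)\,d\beta(Q)=\langle \widetilde S(\cdot,q),F\rangle_{H^2}.
\]
Thus $\widetilde S(\cdot,q)\in H^2(\mathcal U_n)$ also represents $\mathrm{ev}_q$, and uniqueness of the Riesz representer forces $\widetilde S(\cdot,q)=K_q=S(\cdot,q)$ for every $q$, hence $\widetilde S=S$.

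I expect the substance of the argument to be routine given Proposition~\ref{prop:Hardy}, the estimate~\eqref{eq:estimate}, and the Riesz theorem; the real care lies in the quaternionic bookkeeping. Because conjugation reverses the order of products, $\overline{ab}=\bar b\,\bar a$, I must track the side on which quaternionic scalars act throughout the Riesz step and the symmetry computation, and I must keep the two distinct boundary-value operations --- in the first argument ($S^b$) and in the second argument ($S_b$) --- consistently tied together through the symmetry $S(q,p)=\overline{S(p,q)}$. This non-commutative bookkeeping, rather than any analytic difficulty, is the main obstacle.
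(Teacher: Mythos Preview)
Your proposal is correct and follows essentially the same route as the paper: bound the evaluation functional by~\eqref{eq:estimate}, invoke the quaternionic Riesz theorem to produce $K_q$, establish the symmetry $K_p(q)=\overline{K_q(p)}$ via the inner-product computation, and set $S(q,p)=\overline{K_q(p)}$. Two small differences are worth noting. First, the paper opens with an explicit verification that $H^2(\mathcal U_n)$ is nontrivial (by checking that the candidate function $s(q_1+\overline p_1-2\langle p',q'\rangle)$ lies in $H^2$); you omit this, which is fine for the theorem as stated since $S\equiv 0$ would satisfy 1--3 trivially if the space were $\{0\}$. Second, for uniqueness the paper argues by a direct double application of the reproducing formula to $\widetilde S(\cdot,q)$ and $S(\cdot,p)$, whereas you appeal to the uniqueness clause of the Riesz theorem; your route is shorter, while the paper's has the virtue of using only properties 1--3 themselves.
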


\begin{proof}
 We must
  show that the Hardy space $H^2(\mathcal U_n)$ is nontrivial    first. Otherwise the  Cauchy-Szeg\"o
kernel vanishes.  We claim that  the function $s\left(q_1+\overline{p}_1  -2\sum_{k=2}^{n} \bar p_k'q'_k \right)$ for fixed $(p_1,\ldots, p_n)\in \mathcal{U}_n$, with $s(\cdot)$ given by   (\ref{eq:s}),   is in the Hardy space $H^2(\mathcal U_n)$.

  Apply the Laplace operator
\begin{equation} \label{eq:Laplace} \overline{\partial}_{q_{1}}{\partial}_{q_{1}} = \partial_{x_{1}}^2+\partial_{x_{ 2 }}^2+
\partial_{x_{3}}^2+\partial_{x_{4}}^2   
\end{equation}
to the harmonic function $ \frac {1}{|q_1|^2}$ on $\mathbb{H}\setminus\{0\}$ to see that
  $ {\partial}_{q_{1}}\frac {1}{|q_1|^2}= -\frac {2\overline{q}_{1}}{|q_{1}|^4}=h(q_1)$ is a regular function on $\mathbb{H}\setminus\{0\}$,  which  
  is homogeneous of degree $-3$.  Since $\frac {\partial^{2n}}{\partial x_1^{2n}}$ commutes with $\overline{\partial}_{q_{1}}$, the function $s( q_1  )= c_n  \frac {\partial^{2n}}{\partial x_1^{2n}}\frac {\overline{q_1} }{|q_1|^4}$  in (\ref{eq:s}) is regular on $\mathbb{H}\setminus\{0\}$. 
  Consequently, $s( q_1 +x_1 )$ for fixed $x_1>0$ ia also regular on $\mathbb{H}\setminus\{-x_1\}$, and so $\widetilde{s}(q_1,\ldots,q_n):=s( q_1 +x_0 )$ is  
  regular on $\big(\mathbb{H}\setminus\{-x_1\}\big) \times \mathbb{H}^{n-1}$. 
  In particular, $\widetilde{s}( \cdot) $ is  regular on the quaternionic Siegel upper half space $ \mathcal{U}_n$. Now by the invariance in Corollary \ref{cor:transformations}, we see that $\widetilde{s}(\tau_{p^{-1}}(q))$ is also  regular  for fixed $p=(x_2\mathbf{i}+x_3\mathbf{j}+x_4\mathbf{k},p')\in\partial \mathcal{U}_n$. So
$s\left(q_1+\overline{p}_1  -2\sum_{k=2}^{n} \bar p_k'q'_k \right)=\widetilde{s}(\tau_{p^{-1}}(q))$ with $p_1=x_1 +x_2\mathbf{i}+x_3\mathbf{j}+x_4\mathbf{k}$ is regular.

Note that there exists a   constant $C>0$, only depending on the dimension $n$,  such that
\begin{equation*}
    | \widetilde{s}_\varepsilon ( q )|^2\leq \frac C{|q_1 +x_0 +\varepsilon|^{4n+6}}=\frac C{ ((|q'|^2+x_0)^2   + |{\rm Im}q_1|^2)^{2n+3}}
\end{equation*}
for  $q\in \partial \mathcal{U}_n$, which is obviously integrable with respect to the measure $d\beta$. Namely, $\widetilde{s} ( \cdot )$ is in Hardy space $H^2(\mathcal U_n)$, and so is $\widetilde{s}(\tau_{p^{-1}}(q))$ by
the invariance of the Hardy space under the translates in  Proposition \ref{pr:Hardy_invariance}. The claim is proved.

Now for fixed $q\in \mathcal{U}_n$, define a quaternion-valued  right linear functional
  \begin{equation}\label{eq:l-q}\begin{split}
   l_q:H^2(\mathcal{U}_n)&\longrightarrow \mathbb{H}, \\
   F&\longmapsto  F(q).
\end{split}\end{equation}
 It is  bounded  by estimate (\ref{eq:estimate}).
Apply the quaternion version of Riesz's representation theorem to see that there exists an element, denoted by $K (\cdot,q)\in H^2(\mathcal{U}_n)$ such that $l_q( F)= \langle K (\cdot,q), F\rangle=\langle K^b (\cdot,q), F^b\rangle_{L^2(\partial \mathcal U_n)}$. Here $K  (\cdot, \cdot)$ is nontrivial and the boundary value $K^b(p, q)$ exists for
almost all $p\in \partial\mathcal{U}_n$.    We have
  \begin{equation}\label{eq:kernel}
  F(q)= \int_{\partial\mathcal{U}_n}
\overline{K^b(  Q, q)} F^b(Q) d\beta (Q).
  \end{equation}

For fixed $p\in \mathcal{U}_n$, applying (\ref{eq:kernel}) to $K (\cdot,p)$ and $K (\cdot,q)$, we see that
\begin{equation*}\begin{split}
     K (q,p)&=( K^b (\cdot,q), K^b (\cdot,p) )=\int_{\partial\mathcal{U}_n} \overline{ K^b (Q,q)} K^b (Q,p)  d\beta (Q)\\ & =\overline{\int_{\partial\mathcal{U}_n} \overline{K^b (Q,p)} K^b (Q,q)   d\beta (Q)} =\overline{  K ( p,q)   }.
\end{split}\end{equation*}

Denote $S(q, p):=\overline{K (p,q)}$ for $(q, p)\in\mathcal{U}_n  \times \mathcal{U}_n$. Then  $S(q, p)=K (q, p )  $ is regular in $q$, and $S(q, p) =\overline{K (p,q)}= \overline{S(p, q)}$. The function $S$  has the boundary values  as in Theorem~\ref{thm:boundary_value}. Moreover, we have
\begin{equation}\label{eq:boundary-symmetry}
    S_b (Q,p )=\overline{S^b (p,Q)}
\end{equation}
  for $p\in \mathcal{U}_n$, $Q\in \partial\mathcal{U}_n$, which follows from the symmetry  $S(q+\varepsilon\mathbf{e}, p) = \overline{S(p, q+\varepsilon\mathbf{e})}$
  by taking   $\varepsilon\rightarrow0+$.

To show the uniqueness, suppose that $\widetilde{S}( \cdot , \cdot)$ is another function satisfying Theorem~\ref{thm:CM08}. By definition $\widetilde{S}(\cdot  , q)\in H^2(\mathcal{U}_n)$ for any fixed $q\in \mathcal{U}_n$. Choose an arbitrary $p\in \mathcal{U}_n$ and apply the reproducing formula~\eqref{eq:Szego-2} of $S(\cdot  , \cdot)$ and $\widetilde{S}(\cdot  , \cdot)$ to get
\begin{equation*}
\begin{split}
     \widetilde{S} (p,q)&=  \int_{\partial\mathcal{U}_n} S_b (p,Q) \widetilde{S}^b(  Q, q)   d\beta (Q)
     =\overline{\int_{\partial\mathcal{U}_n}\overline{\widetilde{S}^b(  Q, q)}    \overline{S_b (p,Q)}  d\beta (Q)}
     \\ & =\overline{\int_{\partial\mathcal{U}_n} \widetilde{S}_b( q, Q  ) S^b (Q, p )  d\beta (Q)}
     =\overline{   S (q, p ) }= S (p,q   ).
\end{split}
\end{equation*}
In the third identity, we used the equation~\eqref{eq:boundary-symmetry} for $S(\cdot  , \cdot)$ and $\widetilde{S}(\cdot  , \cdot)$.
The theorem is proved.
 \end{proof}

The function $ S(q, p)$ is conjugate right regular in variables $p=(p_1,\ldots,p_n)$:
\begin{equation*}
      {\partial}_{p_l} S(q, p)=\overline{\overline{\partial}_{p_l}K (p,q)}=0.
\end{equation*}


\subsection{\bf Invariance of the Cauchy-Szeg\"o kernel}


Since the Siegel upper half space possesses some invariance properties, it is expected that the Cauchy-Szeg\"o kernel also inherits them. Namely, the following proposition is true.

\begin{prop}\label{prop:Inv-kernel} The Cauchy - Szeg\"o kernel has following invariance properties. \begin{equation}\label{eq:Inv-kernel}
\begin{split}&   S(\tau_p (q),  \tau_p (Q))= S(q,Q),\\&
  S(  R_{\mathbf{a}} ( q),  R_{\mathbf{a}} (   Q ))= S(q,Q),\\ &\sigma S( R_{ \sigma  }( q)  ,   R_{ \sigma  }( Q ))\overline{\sigma}= S(q,Q),\\&
 S(  \delta_r( q) ,\delta_r(  Q ))r^{4n+6}= S(q,Q).
   \end{split}
   \qquad\text{for}\quad q,Q\in \mathcal{U}_n,
\end{equation}
where $p\in \partial\mathcal{U}_n$,
 $\mathbf{a}\in {\rm Sp}(n-1)$,   $\sigma\in \mathbb{H}$ with $|\sigma|=1$  and $r>0$.
\end{prop}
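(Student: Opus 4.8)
The plan is to exploit the uniqueness of the Cauchy-Szeg\"o kernel established in Theorem~\ref{thm:CM08} rather than attempting to manipulate the explicit formula~\eqref{eq:CS}. The key observation is that each of the four transformations $\tau_p$, $R_{\mathbf a}$, $R_\sigma$, $\delta_r$ induces a unitary (up to the indicated scalar factors) operation on $H^2(\mathcal U_n)$, and a kernel reproducing the transformed Hardy space must, by uniqueness, agree with the transported version of $S$. Concretely, for a fixed transformation $\Phi$ from Proposition~\ref{prop:transformations} I would define a candidate kernel $\widetilde S$ by the appropriate conjugation of $S$ by $\Phi$---for instance $\widetilde S(q,Q):=S(\Phi(q),\Phi(Q))$ in the translation and $\mathrm{Sp}(n-1)$-rotation cases, and with the extra quaternionic factors $\sigma(\cdot)\overline\sigma$ or the scalar $r^{4n+6}$ in the remaining two cases---and then verify that $\widetilde S$ satisfies all three defining properties of Theorem~\ref{thm:CM08}. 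Uniqueness then forces $\widetilde S=S$, which is exactly the claimed identity.

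First I would check that each $\Phi$ preserves the Hardy space and acts (essentially) isometrically on it. The regularity-preservation and the invariance of the hypersurfaces $\partial\mathcal U_n+\varepsilon\mathbf e$ are already supplied by Corollary~\ref{cor:transformations} and Proposition~\ref{pr:Hardy_invariance}; what remains is the behavior of the measure $d\beta$ under the change of variables $Q\mapsto\Phi(Q)$ on the boundary. For $\tau_p$ and $R_{\mathbf a}\in\mathrm{Sp}(n-1)$ the measure is invariant, so the conjugated kernel verifies the reproducing property~\eqref{eq:Szego-2} directly; for $\delta_r$ the Haar/Lebesgue measure scales by a fixed power of $r$, and matching this Jacobian against the homogeneity of $S$ is precisely what produces the factor $r^{4n+6}$. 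For $R_\sigma$ one must carry the left multiplication by $\sigma$ through both the integral and the boundary values, using that $|\sigma|=1$ keeps the measure invariant and that left multiplication by a unit quaternion commutes appropriately with the Cauchy-Fueter operators as recorded in Corollary~\ref{cor:transformations}.

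The main obstacle will be the $R_\sigma$ case, where the noncommutativity of $\mathbb H$ demands care about the placement of the quaternionic factors $\sigma$ and $\overline\sigma$. In Corollary~\ref{cor:transformations} regularity under $R_\sigma$ is preserved only after multiplying on the left by $\sigma$ (i.e.\ $\sigma f(R_\sigma(q))$ is regular, not $f(R_\sigma(q))$ itself), so the correct candidate kernel is the two-sided conjugate $\sigma\,S(R_\sigma(q),R_\sigma(Q))\,\overline\sigma$ appearing in~\eqref{eq:Inv-kernel}, and I would have to track both the left factor acting on the reproduced function $F$ and the right factor absorbed into $F^b$ under the substitution $Q\mapsto R_\sigma(Q)$. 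One must also confirm that the symmetry property $S(q,p)=\overline{S(p,q)}$ is respected by this two-sided conjugation, which uses the conjugation-reversal $\overline{\sigma\tau}=\overline\tau\,\overline\sigma$ together with $|\sigma|=1$. Once the bookkeeping of these quaternionic factors is done correctly, each candidate kernel meets properties 1--3 of Theorem~\ref{thm:CM08}, and uniqueness closes the argument.
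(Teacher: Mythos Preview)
Your proposal is correct and follows essentially the same strategy as the paper's own proof: both arguments define the transported kernel, verify that it satisfies the reproducing identity \eqref{eq:Szego-2} by applying it to a function of the form $F\circ\Phi^{-1}$ (with the appropriate quaternionic factor in the $R_\sigma$ case) and changing variables in the boundary integral, check regularity and symmetry, and then invoke the uniqueness clause of Theorem~\ref{thm:CM08}. The paper carries out the $\tau_p$ and $R_\sigma$ cases explicitly and declares the other two analogous, exactly as you anticipate; your identification of the $R_\sigma$ bookkeeping as the only delicate point is also on target.
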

\begin{proof}  Note that the measure $ d\beta (Q)$ is invariant under the translate $\tau_p$. If $F \in   H^2(\mathcal{U}_n)$, then $ F(\tau_{-p} (q))\in   H^2(\mathcal{U}_n)$ by Proposition~\ref{pr:Hardy_invariance}. We get
\begin{equation*}
    F( \tau_{-p} (q) ) =
\int_{\partial\mathcal{U}_n}
S_b(q,  Q)F^b(\tau_{-p} (Q)) d\beta (Q)=
\int_{\partial\mathcal{U}_n}
S_b(q,  \tau_p (Q))F^b( Q ) d\beta (Q),
\end{equation*}
  and  by substituting $\tau_{-p} (q)\mapsto q$, we obtain
 \begin{equation*}
    F(   q ) =
\int_{\partial\mathcal{U}_n}
S_b(\tau_p (q),\tau_p (Q)  )F^b( Q) d\beta (Q).
 \end{equation*}
 We conclude that the function $S(\tau_p (q),\tau_p (Q)  )$ is also regular in $q$ by Corollary~\ref{cor:transformations} and it is symmetric.
The first identity in~\eqref{eq:Inv-kernel} follows by the uniqueness in Theorem~\ref{thm:CM08}.

It follows from $|\xi\sigma|=|\sigma\xi|=|\xi|$ for any quaternionic number $\xi\in \mathbb{H} $ that $(q_1 , q'     )\mapsto (q_1 \sigma, q'\sigma   )$ and $(  q_1  ,q'   )\mapsto (\overline{\sigma} q_1  ,q'   )$ are both orthogonal maps, so is their composition  $ R_{\sigma}\colon ( q_1 ,q')\mapsto (\overline{\sigma}q_1 \sigma , q'\sigma   )$. If $F \in   H^2(\mathcal{U}_n)$, then $  \sigma^{-1} F( R_{\sigma^{-1}}(q)  )$ is regular by Corollary~\ref{cor:transformations} and is in $    H^2(\mathcal{U}_n)$ by definition. Therefore,
 \begin{equation*}\begin{split}
   \sigma^{-1} F( R_{\sigma^{-1}}(q)  )& =
\int_{\partial\mathcal{U}_n}
S_b(q,  Q )\sigma^{-1} F^b( R_{\sigma^{-1}}(Q)  ) d\beta (Q)\\&
 =
\int_{\partial\mathcal{U}_n}
S_b(q,  R_{\sigma }(Q)   )\sigma^{-1} F^b( Q  ) d\beta (Q),
\end{split} \end{equation*}
since  $ d\beta$ is invariant under the orthogonal transformation  $R_{\sigma }$. Substituting $R_{\sigma^{-1}}(q)$ $ \mapsto q$ and multiplying by $\sigma$ on both sides, we get
  \begin{equation*}
  F(  q     ) =
\int_{\partial\mathcal{U}_n}
\sigma S_b(R_{\sigma }(q),R_{\sigma }(Q))\overline{\sigma}  F^b(  Q   ) d\beta (Q).
 \end{equation*}
The function $\sigma S(R_{\sigma }(q),R_{\sigma }(Q))\overline{\sigma}$ is also regular in $q$ by Corollary~\ref{cor:transformations} again and it is symmetric.  The third identity in~\eqref{eq:Inv-kernel} follows by the uniqueness in Theorem~\ref{thm:CM08}.

The second and the fourth identities  are proved by the similar arguments.
\end{proof}


\subsection{\bf Determination of the Cauchy-Szeg\"o kernel}


It is sufficient to show $S_b(q,0)=s(q_1)$.  This is because
\begin{equation}\label{eq:S-s}
     S_b(q,p)=S_b( p^{-1}\cdot q,0)=s\left(q_1-\overline{p}_1  -2\sum_{k=2}^{n} \bar p_k'q'_k\right)
\end{equation}
for $p=( {p}_1 ,\ldots, p ')\in\partial\mathcal{U}_n$, $q\in \mathcal{U}_n$. Taking conjugate in both sides of (\ref{eq:S-s}), we see that \begin{equation}\label{eq:S-s'}
     S^b(q,p)  =s\left(q_1-\overline{p}_1  -2\sum_{k=2}^{n} \bar p_k'q'_k\right)
\end{equation}  holds for $p \in\mathcal{U}_n$ and $q\in \partial\mathcal{U}_n$  by the symmetry of the Cauchy-Szeg\"o
kernel $S(q, p)$ in Theorem \ref{thm:CM08}. Now we fix a point  $(p_1,\ldots, p_n)\in \mathcal{U}_n$. In the proof of Theorem \ref{thm:CM08}, 
we have seen that $s(q_1-\overline{p}_1  -2\sum_{k=2}^{n} \bar p_k'q'_k)$   is in the Hardy space $H^2(\mathcal U_n)$. As elements of the Hardy space $H^2(\mathcal U_n)$,   $S (\cdot,p)$ and $ s(q_1-\overline{p}_1  -2\sum_{k=2}^{n} \bar p_k'q'_k)$ coincide on the boundary $\partial\mathcal{U}_n$. 
They must coincide on the whole $\mathcal{U}_n$ by the uniqueness of the  Cauchy-Szeg\"o kernel following from the reproducing property (\ref{eq:Szego-2}).

Since by
\begin{equation}0=\sum_{l=2}^{n }{\partial}_{q_{l}}\overline{\partial}_{q_{l}}u(q_{1},q'
)=\sum_{l=2}^{n }(\partial_{x_{4l-3}}^2+\partial_{x_{4l-2 }}^2+
\partial_{x_{4l-1}}^2+\partial_{x_{4l}}^2)u(q_1, q'
),\label{eq:harmonicity}
\end{equation}
where $u(q)=S_b(q,0)$, each component of $u( q_1, \cdot)$ is a harmonic function on the ball $\{q'\in \mathbb{H}^{n-1}\mid\
|q'|<\re q_{1  }\}$ for fixed $q_1$ with $\re q_{1  }>0$. On the other hand,
\begin{equation}
  S_b(  (q_1,  \mathbf{a}q' ),   0)= S_b(( q_1,q' ),0)\quad\text{for}\quad q \in \mathcal{U}_n,
\end{equation}
by Proposition~\ref{prop:Inv-kernel}. Since ${\rm Sp}(n-1)$ acts on the sphere $\{q'\in \mathbb{H}^{n-1}\mid\
|q'|=R\}$ transitively, where $R<\re q_{1  }$,
we see that $S_b(( q_1,q' ),0)$ is constant on the sphere. Applying the maximum principle to each component of $S_b(( q_1,q' ),0)$ as a harmonic function in $q'$, we conclude that $S_b(( q_1,q' ),0)$ is constant on the ball $\{q'\in \mathbb{H}^{n-1}\mid
|q'|<\re q_{1  }\}$, and so $S_b((q_1, q' ),0)\equiv S_b(( q_1, 0 ),0)$. Denote $ s(q_1):=S_b(( q_1, 0 ),0)$, an $\mathbb{H}$-valued function defined on the half space $\mathbb{R}^4_+=\{q_1\in \mathbb{H}\mid\ \re q_{1  }>0\}$.

By the third identity in~\eqref {eq:Inv-kernel} we have
$\sigma S_b((  \overline{\sigma} q_1\sigma, 0 ),   0)\overline{\sigma}= S_b((    q_1 , 0),0)
$.
More precisely,
\begin{equation}\label{eq:s-1}
 s(  \overline{\sigma} q_1\sigma )= \overline{\sigma} s(  q_1) \sigma,
\end{equation}
 for any $\sigma\in \mathbb{H} $ with $|\sigma|=1$, and similarly
\begin{equation}\label{eq:s-2}
   s(r  q_1  )  = r^{-2n-3}  s(  q_1).
\end{equation}
by the fourth  identity in~\eqref {eq:Inv-kernel} and $\delta_r:(q_1,0)\mapsto (r^2q_1,0)$.

Take $q_1=x_1\in \mathbb{R}$ in  (\ref{eq:s-1}) to get $s( x_1  )= \overline{\sigma} s(  x_1) \sigma$. Write $s(  x_1)=\xi_1+\xi_2\mathbf{i}+\xi_3\mathbf{j}+\xi_4\mathbf{k}$ and choose $\sigma=\mathbf{i}$. Then $\xi_1+\xi_2\mathbf{i}+\xi_3\mathbf{j}+\xi_4\mathbf{k} =\overline{\mathbf{i}}(\xi_1+\xi_2\mathbf{i}
+\xi_3\mathbf{j}+\xi_4\mathbf{k}){\mathbf{i}}=\xi_1+\xi_2\mathbf{i}-\xi_3\mathbf{j}-\xi_4\mathbf{k}$, and so $\xi_3=\xi_4=0$. Similarly, $\xi_2=0$ by choosing $\sigma=\mathbf{j}$. Thus,~\eqref{eq:s-1} implies that $s(  x_1)$ must be real.

Note that
\begin{equation}\label{eq:orbit}
     \overline{\sigma}(x_1+\mathbf{i}x_2)\sigma=x_1+ x_2[(2y_2^2-1)\mathbf{i}+2y_2y_3\mathbf{j}+2y_2y_4\mathbf{k}],
\end{equation}
if $\sigma=y_2\mathbf{i}+y_3\mathbf{j}+y_4\mathbf{k}$ with $|\sigma|=1$. It easily follows from  (\ref{eq:orbit})  that the orbit of  $x_1+\mathbf{i}x_2$ under the adjoint action of unit quaternions is the 2-dimensional sphere
\begin{equation*}
     \{x_1+\xi_2\mathbf{i}+\xi_3\mathbf{j}+\xi_4\mathbf{k}; \xi_2^2+\xi_3^2+\xi_4^2= x_2^2\}.
\end{equation*}
Hence $s(q_1)$ is determined by its values on $\mathbb{R}^2_+=\{(x_1,x_2)\in \mathbb{R}^2;x_1>0\}$ by (\ref{eq:s-1}).
The homogeneous degree of $s$ in (\ref{eq:s-2}) implies that $s(q_1)$ is determined by its values in the semicircle $ \{(x_1,x_2)\in \mathbb{R}^2;x_1>0,x_1^2+x_2^2=1 \}$. At last, the  Cauchy-Fueter equation for $s$ gives four ordinary differential equations for four components of $s $ along the semicircle.
These ODEs together with the value $s(1)$ uniquely determine the function $s$.

\begin{prop}\label{prop:unique}  On the half space $\mathbb{R}^4_+=\{q_1\in \mathbb{H} \mid\ \re q_{1  }>0\}$, there exists a unique regular function  up to a real constant
satisfying~\eqref{eq:s-1}-\eqref{eq:s-2}.
\end{prop}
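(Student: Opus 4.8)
The plan is to turn the two symmetry constraints \eqref{eq:s-1} and \eqref{eq:s-2} into a reduction of $s$ from a function of the four real coordinates of $q_1\in\mathbb R^4_+$ to a function of a single angular variable, and then to read the Cauchy--Fueter system \eqref{eq:CF} off as a first-order linear ODE in that variable. First I would use \eqref{eq:s-1} together with the classical fact that $\sigma\mapsto\big(\vec r\mapsto\overline\sigma\,\vec r\,\sigma\big)$ realizes all of $\mathrm{SO}(3)$ on $\im\bH$ (the $\mathrm{Sp}(1)\to\mathrm{SO}(3)$ cover, which fixes $\re q_1$ and $|q_1|$). Since every $q_1$ lies on the $\mathrm{Sp}(1)$-orbit of some $x_1+x_2\mathbf i$, the function $s$ is completely determined by its restriction to the two-plane $\{x_1+x_2\mathbf i\}$. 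Moreover the stabilizer of $x_1+x_2\mathbf i$ is the circle $\{e^{\mathbf i\phi}\}$, and substituting $\sigma=e^{\mathbf i\phi}$ into \eqref{eq:s-1} forces $s(x_1+x_2\mathbf i)$ to commute with $\mathbf i$, i.e. to lie in $\spn\{1,\mathbf i\}$; this is the two-variable analogue of the observation, made just above, that $s$ is real on the positive axis. Next I would feed in the homogeneity \eqref{eq:s-2}: writing $x_1=\rho\cos\theta$, $x_2=\rho\sin\theta$ gives $s=\rho^{-(2n+3)}g(\theta)$ with $g=g_1+g_2\mathbf i$, so that $s$ is encoded by the single $\spn\{1,\mathbf i\}$-valued function $g$ on the arc $\theta\in(-\tfrac\pi2,\tfrac\pi2)$.

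The decisive step is to convert \eqref{eq:CF} into an ODE for $g$. The tangential derivatives $\partial_{x_1}s,\partial_{x_2}s$ are expressed through $g'$ and $g$ by the polar chain rule, while the off-slice derivatives $\partial_{x_3}s,\partial_{x_4}s$ are recovered by differentiating the equivariance \eqref{eq:s-1}: along the one-parameter subgroup $\sigma(t)=\cos\tfrac t2+\mathbf u\sin\tfrac t2$, with $\mathbf u\in\im\bH$, $|\mathbf u|=1$, differentiation at $t=0$ shows that the derivative of $s$ in the direction $\tfrac12[q_1,\mathbf u]$ equals $\tfrac12[s(q_1),\mathbf u]$. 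On the slice, taking $\mathbf u=\mathbf j$ and $\mathbf u=\mathbf k$ yields $\partial_{x_4}s$ and $\partial_{x_3}s$ as commutator expressions in $s$ divided by $x_2$. Substituting everything into $\overline\partial_{q_1}s=\partial_{x_1}s+\mathbf i\,\partial_{x_2}s+\mathbf j\,\partial_{x_3}s+\mathbf k\,\partial_{x_4}s=0$ and cancelling the common factor $\rho^{-(2n+4)}$ produces a first-order linear system for $(g_1,g_2)$. Since the coefficient of $g'$ is the invertible quaternion $\mathbf i\,e^{\mathbf i\theta}$, the system can be solved for $g'(\theta)=M(\theta)g(\theta)$ on the open arc, with $M$ smooth away from $\theta=0$.

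Uniqueness then follows from the uniqueness theorem for linear ODEs, and the phrase ``up to a real constant'' is explained by $\mathbb R$-linearity: the whole scheme is linear in $s$, and only a \emph{real} scalar $\lambda$ keeps $\lambda s$ compatible with \eqref{eq:s-1}, because a left or right constant factor must commute with every unit $\sigma$. I expect the main obstacle to be the singular endpoint $\theta=0$, i.e. the positive real axis $x_2=0$, where the $1/x_2$ coefficients blow up. There one must show that requiring $s$ to be a genuine regular (hence real-analytic) function across the axis forces $g_1$ even and $g_2$ odd in $\theta$ — in particular $g_2(0)=0$, which makes the singular term $g_2/\sin\theta$ harmless — and thereby selects exactly the one-real-parameter family of solutions pinned down by $g_1(0)=s(1)\in\mathbb R$. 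The evenness/oddness itself can be read off from \eqref{eq:s-1} with $\sigma=\mathbf j$, which gives $s(x_1-x_2\mathbf i)=g_1-g_2\mathbf i$.

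Existence is the easier half. One may simply exhibit the explicit candidate of \eqref{eq:s}: it is regular (as already shown in the proof of Theorem~\ref{thm:CM08}), it is homogeneous of degree $-(2n+3)$ since $\tfrac{\overline q_1}{|q_1|^4}$ has degree $-3$ and $\partial_{x_1}^{2n}$ lowers the degree by $2n$, and it satisfies \eqref{eq:s-1} because $\partial_{x_1}^{2n}$ and the map $\sigma\mapsto\overline\sigma(\cdot)\sigma$ both commute with quaternionic conjugation and leave $x_1$ and $|q_1|$ fixed. Alternatively, existence follows from solving the regular-at-$\theta=0$ ODE and reconstructing $s$ by the $\mathrm{Sp}(1)$-action, the only point to verify being that this reconstruction is well defined; this holds precisely because, as noted in the first step, the stabilizer circle already fixes the slice values.
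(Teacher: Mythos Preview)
Your approach is essentially the paper's: both reduce $s$, via the $\mathrm{Sp}(1)$-equivariance \eqref{eq:s-1} and the homogeneity \eqref{eq:s-2}, to a function of a single angle $\theta$ on the slice $\{x_1+x_2\mathbf{i}\}$; both recover the off-slice derivatives $\partial_{x_3}s,\partial_{x_4}s$ by differentiating \eqref{eq:s-1} along one-parameter subgroups of unit quaternions; both insert this into the Cauchy--Fueter equation to obtain a first-order linear ODE in $\theta$ and conclude by ODE uniqueness with the initial value $s(1)\in\mathbb{R}$. Two minor streamlinings in your sketch are worth noting: you use the stabilizer circle $\{e^{\mathbf{i}\phi}\}$ to see \emph{a priori} that $s$ on the slice takes values in $\spn\{1,\mathbf{i}\}$, whereas the paper carries all four components $f_1,\dots,f_4$ through to \eqref{eq:f-der}--\eqref{eq:g-theta2} and only kills $g_3,g_4$ at the end by a power-series comparison at $\theta=0$; and you handle the singular point $\theta=0$ by the parity observation coming from $\sigma=\mathbf{j}$ (giving $g_1$ even, $g_2$ odd), whereas the paper again argues term by term in a power series. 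These are cosmetic differences; the skeleton of the argument is the same.
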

\begin{proof}
Since the conjugation action of unit quaternions leaves the function $s$ invariant, see~\eqref{eq:s-1}, its infinitesimal action coincides. From one side, choose $\sigma_t=\cos t +\sin t\mathbf{j}$ for small $t $. Then
\begin{equation*}\begin{split}
\overline{\sigma}_t q_1\sigma_t
&= q_1-t\mathbf{j} (x_1+x_2 \mathbf{i}+x_3 \mathbf{j}+x_4\mathbf{k})+t(x_1+x_2 \mathbf{i}+x_3 \mathbf{j}+x_4\mathbf{k})\mathbf{j}+O(t^2)\\&= q_1+2t  (-x_4 \mathbf{i} + x_2\mathbf{k} ) +O(t^2),
\end{split}\end{equation*} where $q_1=x_1+x_2 \mathbf{i}+x_3 \mathbf{j}+x_4\mathbf{k}$, from which we get
\begin{equation*}
 \left.\frac d{dt}\right|_{t=0}  s(  \overline{\sigma}_t q_1\sigma_t ) =-2x_4\partial_{x_2}s(  q_1 )+2x_2\partial_{x_4}s(  q_1).
\end{equation*}
From the other side, taking derivatives of  $\overline{\sigma}_t s(  q_1) \sigma_t$ with respect to $t$ at $0$ we get
\begin{equation}\label{eq:s-derivative-1}
  -2x_4\partial_{x_2}s(  q_1)+2x_2\partial_{x_4}s(  q_1) =-\mathbf{j}s(  q_1) + s(  q_1)\mathbf{j}.
\end{equation}
Similarly, choosing $\sigma_t=\cos t +\sin t\mathbf{k}$, we find that
\begin{equation}\label{eq:s-derivative-2}
    2x_3\partial_{x_2}s(  q_1)-2x_2\partial_{x_3}s(  q_1) =-\mathbf{k}s(  q_1)+s(  q_1)\mathbf{k}.
\end{equation}

The homogeneity of degree $-2n-3$ of the function $s$ in~\eqref{eq:s-2} implies the Euler equation for $s$:
\begin{equation}\label{eq:s-derivative-3}
     x_1\partial_{x_1}s(  q_1)+x_2\partial_{x_2}s(  q_1) +  x_3\partial_{x_3}s(  q_1)+x_4\partial_{x_4}s(  q_1)
     =-(2n+3)s(  q_1).
\end{equation}
Restricting to  $q_1=x_1+x_2\mathbf{i}\in \mathbb{R}^2_+$, i.e. $x_3=x_4=0$, we obtain
\begin{equation*}
      2x_2\partial_{x_4}s(  q_1) =-\mathbf{j}s(  q_1) + s(  q_1)\mathbf{j},\qquad-2x_2\partial_{x_3}s(  q_1)  =-\mathbf{k}s(  q_1)+s(  q_1)\mathbf{k}.
\end{equation*}
Substitute it  into the Cauchy-Fueter equation
\begin{equation*}
   \partial_{x_{ 1}}s(  q_1)+
 \mathbf{i}\partial_{x_{ 2}}s(  q_1)
 + \mathbf{j}\partial_{x_{ 3}}s(  q_1)+ \mathbf{k}\partial_{x_{ 4}} s(  q_1)=0
\end{equation*}
to deduce
\begin{equation}\label{eq:s-der}
   2x_2\partial_{x_{ 1}}s(  q_1)+
2x_2 \mathbf{i}\partial_{x_{ 2}}s(  q_1)
 =-2\mathbf{i}s(  q_1)+\mathbf{j}s(  q_1)\mathbf{k}-\mathbf{k}s(  q_1)\mathbf{j}.
\end{equation}
Write $s(x_{ 1}+
 \mathbf{i}x_{ 2})=f_1+
 f_2\mathbf{i}
 + f_3 \mathbf{j}+ f_4\mathbf{k}$ on $\mathbb{R}^2_+$. Then, the equation~\eqref{eq:s-der} is equivalent to
 \begin{equation}\label{eq:f-der}\begin{split}
     x_2(\partial_{x_{ 1}} f_1-
   \partial_{x_{ 2}} f_2)&=2f_2,
   \\
 x_2(\partial_{x_{ 1}} f_2+
  \partial_{x_{ 2}} f_1)&=0,
  \\
 x_2(\partial_{x_{ 1}} f_3-
   \partial_{x_{ 2}} f_4)&=  f_4,
   \\
  x_2(\partial_{x_{ 1}} f_4+
   \partial_{x_{ 2}} f_3)&=-  f_3.
\end{split} \end{equation}
Euler's equation~\eqref{eq:s-derivative-3} implies
\begin{equation}\label{eq:s-derivative-4}
     x_1\partial_{x_1}f_k+x_2\partial_{x_2}f_k
     =-(2n+3)f_k, \qquad k=1,2,3,4,
\end{equation} on  $\mathbb{R}^2_+$. Now  we have four real functions $f_1,f_2,f_3,f_4$  on the upper half plane $\mathbb{R}^2_+$ satisfying
 8 equations in~\eqref{eq:f-der}-\eqref{eq:s-derivative-4} with conditions $f_2(x_1,0)=f_3(x_1,0)=f_4(x_1,0)=0$ and $f_1(x_1,0)$ is real.

On the semicircle $ \{(x_1,x_2)\in \mathbb{R}^2\mid \ x_1>0,x_1^2+x_2^2=1 \}$ we can separate the system~\eqref{eq:f-der} into two parts, one of which depends on $(x_1,x_2)$ and another one depends on $(x_3,x_4)$ variables.
Take the sum of  the first identity in~\eqref{eq:f-der}, multiplying  by $x_2$, and the second one multiplying by $-x_1$ to get
\begin{equation}\label{eq:s-derivative-5}
    x_2( x_2\partial_{x_{ 1}}  -x_1
   \partial_{x_{ 2}}) f_1= x_2(2f_2+x_1\partial_{x_1}f_2 +x_2\partial_{x_2}f_2)=- (2 n+1)x_2f_2.
\end{equation}
Now   set $ x_1=\cos\theta$, $ x_2=\sin\theta$, $\theta\in (-\pi,\pi)$, and  $g_j(\theta):=f_j(\cos \theta, \sin\theta,0,0)$. The equality~\eqref{eq:s-derivative-5} implies
\begin{equation}\label{eq:g-theta1}
       g_1'(\theta)= (2n+1)g_2.
\end{equation}
Similarly, we have
\begin{equation*}\begin{split}&x_2( x_2\partial_{x_{ 1}}  -x_1
   \partial_{x_{ 2}}) f_2=2x_1f_2+ (2 n+3)x_2f_1,\\&x_2( x_2\partial_{x_{ 1}}  -x_1
   \partial_{x_{ 2}}) f_3= x_1f_3- (2 n+2)x_2f_4,\\&x_2( x_2\partial_{x_{ 1}}  -x_1
   \partial_{x_{ 2}}) f_4=  x_1f_4+ (2 n+2)x_2f_3,
\end{split}\end{equation*}
and so
\begin{equation}\label{eq:g-theta2}\begin{split}
  \sin\theta   g_2'(\theta)&=-2 g_2\cos\theta-(2n+3)g_1\sin\theta ,\\
\sin \theta   g_3'(\theta)&=-g_3\cos \theta   +2(n+1)g_4\sin\theta,\\  \sin \theta g_4'(\theta)&= - g_4\cos \theta-2(n+1)  g_3\sin\theta.
\end{split} \end{equation}
We obtain four real functions $g_1,g_2,g_3,g_4$  on $ (-\pi,\pi) $ satisfying
4 ordinary differential   equations~\eqref{eq:g-theta1}-\eqref{eq:g-theta2} under  the condition
\begin{equation}\label{eq:initial}
g_1(0)\in\mathbb{R}^1,\qquad    g_2(0)=g_3(0)=g_4(0)=0.
\end{equation}

To see that $g_3$ and $g_4$ vanishing, note that $s$ is real analytic since it is harmonic. So the functions $g_j$, $j=3,4$, are real analytic in $\theta$. Inductively, we can assume $g_j(\theta)=\sum_{m=N}^\infty a_m^{(j)}\theta^m$, $j=3,4$. Compare the coefficients of term $\theta^N$ in the third and fourth equations in~\eqref{eq:g-theta2}, we see that
\begin{equation*}
     Na_N^{(3)}=-a_N^{(3)},\qquad Na_N^{(4)}=-a_N^{(4)},
\end{equation*}
and so $a_N^{(3)}= a_N^{(4)}=0$. Therefore, $g_3\equiv g_4\equiv 0 $. The uniqueness of $g_1$ and $g_2$ follows from  vanishing of the solutions $g_1$, $g_2$  to~\eqref{eq:g-theta1} and the first equation in~\eqref{eq:g-theta2} with the initial conditions $g_1(0)=g_2(0)  =0$ by the same arguments as above.
  The result follows.
\end{proof}

\begin{cor}\label{cor:s} The function $s(q_{1})$ is given by
 \begin{equation}\label{eq:s-solution}
   c_n  \frac {\partial^{2n}}{\partial x_1^{2n}}\frac {\overline{q}_{1}}{|q_{1}|^4}
\end{equation}
for some real constant $ c_n$.
\end{cor}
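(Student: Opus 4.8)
The plan is to recognize that Proposition~\ref{prop:unique} has already done all the hard work: it guarantees that a regular function on $\real^4_+$ satisfying \eqref{eq:s-1}--\eqref{eq:s-2} is unique up to a real constant. So I would not re-solve the ODE system; instead I would simply exhibit the explicit candidate
\[
\Phi(q_1):=\frac{\partial^{2n}}{\partial x_1^{2n}}\frac{\overline q_1}{|q_1|^4},
\]
check that it meets the three hypotheses of that proposition, and then invoke uniqueness to conclude that $s=c_n\Phi$ for some real $c_n$, which is exactly \eqref{eq:s-solution}. Thus the work reduces to verifying (i) regularity on $\real^4_+$, (ii) homogeneity of degree $-(2n+3)$, and (iii) the conjugation equivariance \eqref{eq:s-1}.

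Regularity is essentially recorded already in the proof of Theorem~\ref{thm:CM08}: since $\tfrac{1}{|q_1|^2}$ is harmonic on $\bH\setminus\{0\}$, the function $h(q_1)=\partial_{q_1}\tfrac{1}{|q_1|^2}=-\tfrac{2\overline q_1}{|q_1|^4}$ is regular because $\overline\partial_{q_1}h=\overline\partial_{q_1}\partial_{q_1}\tfrac{1}{|q_1|^2}=\Delta\tfrac{1}{|q_1|^2}=0$; and because the constant-coefficient operator $\tfrac{\partial^{2n}}{\partial x_1^{2n}}$ commutes with $\overline\partial_{q_1}$, the function $\Phi$ is again regular. Homogeneity is a degree count: $\tfrac{\overline q_1}{|q_1|^4}$ is homogeneous of degree $-3$, and each $\partial_{x_1}$ lowers the degree by one, so $2n$ of them give degree $-3-2n=-(2n+3)$, matching \eqref{eq:s-2}.

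The step that needs the most care, and the one I expect to be the main obstacle, is the equivariance \eqref{eq:s-1}. I would first note that the base function $g(q_1):=\tfrac{\overline q_1}{|q_1|^4}$ already obeys it: for a unit $\sigma$ one has $|\overline\sigma q_1\sigma|=|q_1|$ and $\overline{\overline\sigma q_1\sigma}=\overline\sigma\,\overline q_1\,\sigma$, so $g(\overline\sigma q_1\sigma)=\overline\sigma g(q_1)\sigma$. The delicate point is that the $x_1$-derivatives do not spoil this, and here the key fact is that the map $q_1\mapsto\overline\sigma q_1\sigma$ fixes the real part $x_1=\re q_1$ and merely rotates the imaginary coordinates $(x_2,x_3,x_4)$ by an element of $\SO(3)$. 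Consequently the new real coordinate depends only on $x_1$ while each new imaginary coordinate depends only on $(x_2,x_3,x_4)$, so by the chain rule $\partial_{x_1}$ commutes with the substitution $q_1\mapsto\overline\sigma q_1\sigma$; iterating and using that $\overline\sigma(\cdot)\sigma$ is a fixed real-linear map commuting with $\partial_{x_1}^{2n}$ gives
\begin{equation*}
\Phi(\overline\sigma q_1\sigma)=\Big(\tfrac{\partial^{2n}}{\partial x_1^{2n}}g\Big)(\overline\sigma q_1\sigma)=\tfrac{\partial^{2n}}{\partial x_1^{2n}}\big[g(\overline\sigma q_1\sigma)\big]=\tfrac{\partial^{2n}}{\partial x_1^{2n}}\big[\overline\sigma g(q_1)\sigma\big]=\overline\sigma\,\Phi(q_1)\,\sigma,
\end{equation*}
which is precisely \eqref{eq:s-1}.

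With (i)--(iii) in hand, $\Phi$ is a regular function on $\real^4_+$ satisfying \eqref{eq:s-1}--\eqref{eq:s-2}, so the uniqueness in Proposition~\ref{prop:unique} forces $s=c_n\Phi$ for a real constant $c_n$, yielding \eqref{eq:s-solution}. Everything except the commutation of $\partial_{x_1}^{2n}$ with the conjugation action is routine, and that commutation rests entirely on the observation that $\re q_1$ is an invariant of the $\SO(3)$-action generated by unit quaternions.
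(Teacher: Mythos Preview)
Your proposal is correct and follows essentially the same approach as the paper: exhibit the explicit candidate, verify regularity (via $\overline\partial_{q_1}\partial_{q_1}\tfrac{1}{|q_1|^2}=0$ and commutation with $\partial_{x_1}^{2n}$), homogeneity, and conjugation equivariance (using that $\overline\sigma q_1\sigma$ fixes $x_1$ so $\partial_{x_1}^{2n}$ commutes with the substitution), then invoke the uniqueness of Proposition~\ref{prop:unique}. The only addition in the paper is a redundant direct verification that the explicit $f_1,f_2$ satisfy \eqref{eq:f-der}, which is a sanity check rather than part of the logical argument.
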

 \begin{proof} In the proof of Theorem \ref{thm:CM08}, we have seen that
  $h(q_1)= {\partial}_{q_{1}}\frac {1}{|q_1|^2}= -\frac {2\overline{q}_{1}}{|q_{1}|^4} $ is a regular function on $\mathbb{H}\setminus\{0\}$, which   obviously  satisfies the invariance~\eqref{eq:s-1}, and so is  the function~\eqref{eq:s-solution}. The conjugation action $\overline{\sigma} q_1\sigma $, fixing $x_1$
 for any $\sigma\in \mathbb{H} $ with $|\sigma|=1$, implies
 \begin{equation*}\left(\frac {\partial^{2n}}{\partial x_1^{2n}}   h\right)(\overline{\sigma} q_1\sigma)=
  \frac {\partial^{2n}}{\partial x_1^{2n}}[   h(\overline{\sigma} q_1\sigma)]=
  \frac {\partial^{2n}}{\partial x_1^{2n}}[  \overline{\sigma} h( q_1)\sigma]=\overline{\sigma}\frac {\partial^{2n}}{\partial x_1^{2n}}  h( q_1 )\sigma,
 \end{equation*}
{\it  i.e.,}
  (\ref{eq:s-solution})
 satisfies the invariance~\eqref{eq:s-1}. The function, defined by~\eqref{eq:s-solution}, is homogeneous of degree $-2n-3$. So $s$ is given by
~\eqref{eq:s-solution}   by the uniqueness in Proposition~\ref{prop:unique}.

We verify now that $s(x_{ 1}+
 \mathbf{i}x_{ 2})$ satisfies~\eqref{eq:f-der}.
Write $s(x_{ 1}+
 \mathbf{i}x_{ 2})=f_1+
 f_2\mathbf{i}
 + f_3 \mathbf{j}+ f_4\mathbf{k}$ on $\mathbb{R}^2_+$. Then,  $f_3 \equiv0, f_4\equiv0$  and
 \begin{equation}\label{eq:f1-f2}
    f_1=  \frac {\partial^{2n}}{\partial x_1^{2n}}\frac {x_1}{(x_1^2+x_2^2 )^2},\qquad
      f_2=  \frac {\partial^{2n}}{\partial x_1^{2n}}\frac {-x_2}{(x_1^2+x_2^2 )^2},
 \end{equation}
 up to a constant $c_n$. Functions $f_j$'s satisfy (\ref{eq:f-der})-(\ref{eq:s-derivative-4}). Note that
 \begin{equation}
 \label{eq:partial-1}
     \partial_{x_{ 1}} \frac {-x_2}{(x_1^2+x_2^2 )^2}+ \partial_{x_{ 2}} \frac {x_1}{(x_1^2+x_2^2 )^2}
    = 0.
 \end{equation}
Taking  derivatives $ \frac {\partial^{2n}}{\partial x_1^{2n}}$ and multiplying by $x_2$ both sides of~\eqref{eq:partial-1}, one obtains
the second equation in~\eqref{eq:f-der}. Note that
 \begin{equation}
 \label{eq:partial-2}
     \partial_{x_{ 1}} \frac {x_1}{(x_1^2+x_2^2 )^2}-
   \partial_{x_{ 2}}  \frac {-x_2}{(x_1^2+x_2^2 )^2}=\frac { -2}{(x_1^2+x_2^2 )^2}.
 \end{equation}
Taking  derivatives $ \frac {\partial^{2n}}{\partial x_1^{2n}}$ and multiplying by $x_2$  on both sides of~\eqref{eq:partial-2},
we get the first equation in~\eqref{eq:f-der}.
\end{proof}


\subsection{Calculation of the constant for the Cauchy-Szeg\"o operator}


\begin{thm}
The constant $c_n$ in the function
$$s(q_1)=c_n\frac{\partial^{2n}}{\partial x_1^{2n}}\frac{\bar q_1}{|q_1|^4},\quad q_1=x_1+x_2i+x_3j+x_4k
$$
is given by~\eqref{eq:constant}.
\end{thm}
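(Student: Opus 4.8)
The plan is to determine $c_n$ by feeding the kernel into its own reproducing formula and evaluating at one convenient point. For fixed $p\in\mathcal U_n$ the function $F=S(\cdot,p)$ lies in $H^2(\mathcal U_n)$ by Theorem~\ref{thm:CM08}, so the reproducing property~\eqref{eq:Szego} applied to it gives $S(q,p)=\int_{\partial\mathcal U_n}S_b(q,Q)S^b(Q,p)\,d\beta(Q)$. Invoking the boundary symmetry~\eqref{eq:boundary-symmetry} in the form $S^b(Q,p)=\overline{S_b(p,Q)}$ and setting $q=p$ yields the self-consistency identity
\begin{equation*}
S(p,p)=\int_{\partial\mathcal U_n}\big|S_b(p,Q)\big|^2\,d\beta(Q).
\end{equation*}
Its left-hand side is linear in $c_n$ whereas the right-hand side is quadratic, so this single equation pins $c_n$ down. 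By the translation and dilation invariance of Proposition~\ref{prop:Inv-kernel} it suffices to evaluate both sides at the base point $p=\mathbf e=(1,0,\ldots,0)\in\mathcal U_n$.

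For the left-hand side, formula~\eqref{eq:CS} with $\mathbf e'=0$ gives $S(\mathbf e,\mathbf e)=s(2)$. Since $s(x_1)=c_n\,\partial_{x_1}^{2n}x_1^{-3}$ for real $x_1>0$ and $\partial_{x_1}^{2n}x_1^{-3}=\tfrac{(2n+2)!}{2}\,x_1^{-(2n+3)}$, this is the explicit multiple $S(\mathbf e,\mathbf e)=c_n\,\tfrac{(2n+2)!}{2^{2n+4}}$.

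The substance of the proof is the right-hand side. Parametrising $\partial\mathcal U_n$ by Heisenberg coordinates $Q=(|Q'|^2+\omega,Q')$ with $\omega\in\im\mathbb H$ and $Q'\in\mathbb H^{n-1}$, and using $\mathbf e'=0$, one finds $S_b(\mathbf e,Q)=s\big(1+|Q'|^2-\omega\big)$, whose argument has positive real part so that no singularity of $s$ is met. Writing $s=c_n(A-\omega B)$ with the real functions $A=\partial_{x_1}^{2n}\tfrac{x_1}{(x_1^2+r^2)^2}$ and $B=\partial_{x_1}^{2n}\tfrac{1}{(x_1^2+r^2)^2}$, evaluated at $x_1=1+|Q'|^2$ and $r=|\omega|$, gives $|S_b(\mathbf e,Q)|^2=c_n^2(A^2+r^2B^2)$. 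I would integrate first over $\omega\in\im\mathbb H\cong\mathbb R^3$: since $|s|^2$ is homogeneous of degree $-(4n+6)$, the function $u\mapsto\int_{\mathbb R^3}|s(u+\omega)|^2\,d\omega$ is homogeneous of degree $-(4n+3)$, hence equals its value at $u=1$ times $u^{-(4n+3)}$. The remaining integral over $Q'\in\mathbb H^{n-1}\cong\mathbb R^{4n-4}$ of $(1+|Q'|^2)^{-(4n+3)}$ is a standard Beta-type integral producing $\pi^{2n-2}\tfrac{(2n+4)!}{(4n+2)!}$.

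Everything thus reduces to the single radial quantity $\int_0^\infty\big(A(1,r)^2+r^2B(1,r)^2\big)r^2\,dr$, and this is where $K(n)$ is born. I would compute $A(1,r)$ and $B(1,r)$ by partial fractions, resolving $(x_1^2+r^2)^{-2}$ through the simple poles $x_1=\pm ir$ so that $\partial_{x_1}^{2n}$ produces explicit powers $(1\mp ir)^{-m}$; expanding their real and imaginary parts (the origin of the binomial sums $C_k^{2l}$, $C_l^m$, $C_{k-2m}^s$ and the signs), squaring, and integrating each resulting term $r^{2j+2}(1+r^2)^{-Q}$ against $dr$ by the Beta function (the origin of the factorial ratios $\tfrac{(2(k-2m-s+1))!}{(k-2m-s+1)!(4n+5+k-2m-s)!}$). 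Collecting the $A^2$ and $r^2B^2$ contributions organises the sum into the stated form, the cubic weight $\alpha_k$ appearing once an inner index is summed in closed form. Finally, equating the two sides, cancelling one factor of $c_n$, and simplifying the factorials — where, for instance, $\tfrac{(2n+2)!}{(2n+4)!}(n+2)(2n+3)=\tfrac12$ accounts for the $(n+2)(2n+3)$ in the denominator of~\eqref{eq:constant} cancelling against the $Q'$-integral — delivers~\eqref{eq:constant}. The main obstacle is exactly this bookkeeping: carrying out the $2n$-th derivative in closed form, controlling the triple binomial expansion after squaring, and tracking every numerical factor, especially the powers of $2$ and $\pi$, through the two Beta integrations without error.
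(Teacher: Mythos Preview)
Your setup is exactly the paper's: apply the reproducing formula to $F=S(\cdot,\mathbf e)$ to obtain $S(\mathbf e,\mathbf e)=\int_{\partial\mathcal U_n}|S_b(\mathbf e,Q)|^2\,d\beta(Q)$, evaluate the left side as $c_n\,(2n+2)!/2^{2n+4}$, and solve for $c_n$. Your homogeneity trick to peel off the $Q'$-integral is a clean shortcut that the paper does not use explicitly, but it is sound and gives the same reduction to a single radial integral in $r=|\omega|$.

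The gap is in that radial integral, which is where the specific shape of $K(n)$ is born. The paper does \emph{not} proceed by partial fractions. It writes $\bar p/|p|^4=\bar p^{-1}p^{-2}$ and applies Leibniz to get
\[
\partial_{x_1}^{2n}\!\big(\bar p^{-1}p^{-2}\big)=(2n)!\,|p|^{-2}p^{-2n-1}\sum_{k=0}^{2n}(2n+1-k)\Big(\tfrac{p^2}{|p|^2}\Big)^{k},
\]
and since $p^2/|p|^2=e^{\hat n\theta}$ is a unit quaternion with $\cos\theta=\big((1+|q'|^2)^2-r^2\big)/\big((1+|q'|^2)^2+r^2\big)$, the squared modulus becomes $\big((2n)!\big)^2|p|^{-(4n+6)}\sum_{k}\alpha_k\cos(k\theta)$. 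The cubic $\alpha_k$ is precisely the cross-term $\sum_j(2n+1-j)(2n+1-j-k)$ in $|\sum_j(2n+1-j)e^{\hat n j\theta}|^2$; it is not obtained by ``closing an inner index'' afterwards. The binomials $C_k^{2l},C_l^m$ then come from the Chebyshev-type expansion of $\cos(k\theta)$ in powers of $\cos\theta$, and the innermost sum over $s$ together with the factorial ratio arises from evaluating the remaining $r$-integral via $_2F_1(-d,4n+9/2,4n+6;2)$ and a Jacobi polynomial $P_d^{4n+5,-d-3/2}(-3)$.

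Your partial-fraction approach (note: $x_1=\pm ir$ are \emph{double}, not simple, poles of $(x_1^2+r^2)^{-2}$) would certainly yield \emph{some} closed expression for $\int_0^\infty(A(1,r)^2+r^2B(1,r)^2)r^2\,dr$, but there is no reason it should organise itself into the four-fold sum \eqref{K} with exactly those indices and weights; that structure is an artefact of the quaternion-exponential route above. To prove the theorem as stated you must either carry out the paper's computation, or produce your own expression and then prove a separate combinatorial identity equating it to $K(n)$. As written, the proposal asserts the target formula rather than deriving it.
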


\begin{proof}
To calculate the constant $c_n$, we choose
$$
F(q)=F\big((q_1,q')\big)=c^{-1}_n\overline{S({\mathbf e},q)},\qquad {\mathbf e}=(1,0,\ldots,0).
$$
Then
$$
F(q)=\frac{\partial^{2n}}{\partial y_1^{2n}}\frac{\overline{1+\bar q_1-2\langle0,q'\rangle}}{|1+\bar q_1-2\langle0,q'\rangle|^4}=\frac{\partial^{2n}}{\partial y_1^{2n}}\frac{1+ q_1}{|1+\bar q_1|^4}\quad\text{ with}\quad y_1=1+x_1.
$$
First we calculate the value $F({\mathbf e})$. We obtain
$$
F({\mathbf e})=\frac{\partial^{2n}}{\partial x_1^{2n}}\frac{1+ q_1}{|1+\bar q_1|^4}\Big\vert_{x_1=1,x_{2,3,4}=0}=
\frac{\partial^{2n}}{\partial x_1^{2n}}\frac{1+ x_1}{|1+x_1|^4}\Big\vert_{x_1=1}
=\frac{\partial^{2n}}{\partial x_1^{2n}}\frac{1}{(1+x_1)^3}\Big\vert_{x_1=1}
$$
since $x_1>0$ in $\mathcal U_n$.
We continue
\begin{align*}
F({\mathbf e})& =\frac{\partial^{2n}}{\partial x_1^{2n}}\frac{1}{(1+x_1)^3}\Big\vert_{x_1=1}=
(-3)(-4)\ldots(-3-(2n-1))(1+x_1)^{-3-2n}\frac{(-2)}{(-2)}\Big\vert_{x_1=1}
\\
&=\frac{(-1)^{2(n+1)}}{2}(2n+2)!2^{{-2n-3}}=\frac{(2n+2)!}{2^{{2n+4}}}.
\end{align*}

On the other hand,
\begin{align*}
F({\mathbf e})& =\int_{\partial\mathcal U_n}S(e,Q)F^b(Q)d\beta(Q)
=c_n^{-1}\int_{\partial\mathcal U_n}S(e,Q)\overline{ S^b(e,Q)}d\beta(Q)
\\
&=
c_n\int_{\mathbb H^n}\int_{\mathbb R^3}\Big|\frac{\partial^{2n}}{\partial x_1^{2n}}\frac{1+ \bar q_1}{|1+q_1|^4}\Big|^2\,dq'\,dx_2dx_3dx_4,
\end{align*}
where $ q_1=|q'|^2+x_2\mathbf{i}+x_3\mathbf{j}+x_4\mathbf{k}$, since $Q$ is in $\partial\mathcal U_n$.

We start from calculating the derivative $\frac{\partial^{2n}}{\partial x_1^{2n}}\frac{\bar p}{|p|^4}$. We have
$$
\frac{\bar p}{|p|^4}=\frac{\bar p^{-1}\bar p}{|p|^2}p^{-1}=\bar p^{-1}p^{-2}\quad\text{by}\quad \frac{\bar p}{|p|^2}=p^{-1}.
$$
Thus
\begin{equation}\label{1}
\frac{\partial^{2n}}{\partial x_1^{2n}}\bar p^{-1}p^{-2}
=\sum_{k=0}^{2n}C^{k}_{2n}\frac{\partial^{k}}{\partial x_1^{k}}\bar p^{-1}\frac{\partial^{2n-k}}{\partial x_1^{2n-k}}p^{-2}.
\end{equation}
Since
$$
\frac{\partial^{k}}{\partial x_1^{k}}\bar p^{-1}=(-1)(-2)\ldots(-1-(k-1))\bar p^{-1-k}=(-1)^k
k!\bar p^{-1-k}
$$
and
$$
\frac{\partial^{2n-k}}{\partial x_1^{2n-k}}p^{-2}=(-2)(-3)\ldots(-2-(2n-k-1))p^{-2-2n+k}=
(-1)^{2n-k}(2n-k+1)!p^{-2-2n+k},
$$
substituting them in~\eqref{1}, we get
\begin{align*}
\frac{\partial^{2n}}{\partial x_1^{2n}}\bar p^{-1}p^{-2}& =\sum_{k=0}^{2n}\frac{(2n)!}{k!(2n-k)!}(-1)^{k}k!\bar p^{-1-k}(-1)^{2n-k}(2n-k+1)!p^{-2-2n+k}
\\
&=
(2n)!\sum_{k=0}^{2n}(2n-k+1)\bar p^{-1-k}p^{-2-2n+k}
\\
&=(2n)!\sum_{k=0}^{2n}(2n-k+1)\frac{\bar p^{-k}p^k}{|p|^2}p^{-2n-1}.
\end{align*}
Taking the absolute value, we get
$$
\Big|\frac{\partial^{2n}}{\partial x_1^{2n}}\bar p^{-1}p^{-2}\Big|^2=\frac{\big((2n)!\big)^2}{|p|^{4n+6}}\Big|\sum_{k=0}^{2n}(2n-k+1)\Big(\frac{p^2}{|p|^2}\Big)^k\Big|^2,\quad p=1+q_1.
$$

Now we concentrate in calculating the absolute value of the latter sum. Observe that it is square of the length of the sum of quaternions obtained by rotation on the same angle.
We denote by $\frac{(1+q_1)^2}{|1+q_1|^2}=s=\re s+\bfi s_2+\bfj s_3+\bfk s_4$, where
$$\re s=\frac{(1+x_1)^2-(x_2^2+x_3^2+x_4^2)}{(1+x_1)^2+x_2^2+x_3^2+x_4^2},\qquad \bfi s_2+\bfj s_3+\bfk s_4=\frac{2(1+x_1)(x_2\mathbf{i}+x_3\mathbf{j}+x_4\mathbf{k})}{(1+x_1)^2+x_2^2+x_3^2+x_4^2}.
$$
Note that for any quaternion $s$, written as $s=\re(s)+\vec v$, we have
$\re(s)\,=\, \|s\|\cdot\cos \theta$ and $\vec v=\|s\|\cdot \frac{\vec v}{\|\vec v\|}\sin\theta$,
because of
$\re(s)^2+\|\vec v\|^2\,=\, \|s\|^2\big(\cos^2\theta+\big\|\frac{\vec v}{\|\vec v\|}\big\|^2\sin^2\theta\big)\,=\,\|s\|^2$.
(See {\it e.g.} \cite{Port}.) Since $s$ is a unit quaternion it can be also written as
  \begin{equation}
\label{eq:theta}
s\,=\,e^{\hat n \theta},\quad\text{with}\quad \cos\theta\,=\,\re(s),\end{equation}
 and the unite vector $\hat n=\frac{2(1+x_1)(x_2,x_3,x_4)}{|2(1+x_1)(x_2,x_3,x_4)|}$, where $(x_2,x_3,x_4)$ denotes the vector in $\mathbb R^3$. Moreover, $s^k\,=\, e^{\hat n k\theta}$. Thus
\begin{align*}
\Big|\sum_{k=0}^{2n}(2n-k+1)e^{\hat n k\theta}\Big|^2 \,& =\,\Big|(2n+1)+2ne^{\hat n \theta}+(2n-1)e^{\hat n 2\theta}+\ldots+2e^{\hat n (2n-1)\theta}+e^{\hat n 2n\theta}\Big|^2
\\
&=\,
\sum_{k=0}^{2n}(2n+1-k)^2
\\ & +
2\Big((2n+1)2n+2n(2n-1)+\ldots+3\cdot 2+2\cdot 1\Big)\cos\theta
\\& +
2\Big((2n+1)(2n-1)+2n(2n-2)+\ldots+4\cdot 2+3\cdot 1\Big)\cos(2\theta)
\\& +
\ldots\ldots\ldots\ldots\ldots\ldots\ldots\ldots\ldots\ldots\ldots\ldots
\\& +
2\Big((2n+1)2+2n\cdot1\Big)\cos((2n-1)\theta)
\\& +
2\Big((2n+1)\cdot1\Big)\cos(2n\theta).
\end{align*}
We calculate by using the auxiliary formulas
$$
\alpha_0=\sum_{j=0}^{2n}(2n+1-j)^2=\sum_{j=1}^{2n+1}j^2=\frac{(n+1)(2n+1)(4n+3)}{3},
$$
$$
\alpha_1=2\sum_{j=1}^{2n}j(j+1)=2\frac{(n+1)(2n+1)(4n)}{3},
$$
$$
\alpha_2=2\sum_{j=1}^{2n-1}j(j+2)=2\frac{(n)(2n-1)(4n+5)}{3},
$$
$$
\ldots\ldots\ldots,\quad \alpha_{2n-1}=2(6n+2),\quad\alpha_{2n}=2(2n+1).
$$
In general
\begin{equation}\label{alpha_k}
\alpha_k=\sum_{j=1}^{2n+1-k}j(j+k)=\frac{(2n+1-k)(2n+2-k)(4n+3+k)}{6}.
\end{equation}

Conclude that
$$
\Big|\sum_{k=0}^{2n}(2n-k+1)e^{\hat n k\theta}\Big|^2=\sum_{k=0}^{2n}\alpha_k\cos(k\theta).
$$
Summarizing all that we did, we come to calculation of the following integral
$$
F( {\mathbf e})=c_n\big((2n)!\big)^2\sum_{k=0}^{2n}\alpha_k\int_{\mathbb H^n}dq'\underbrace{\int_{[0,\pi]\times[0,2\pi]}\sin\psi\,d\psi\,d\phi}_{=4\pi}\int_0^{\infty}\frac{r^2\cos(k\theta)}{((1+|q'|^2)^2+r^2)^{4n+6}}dr,
$$
where $r^2=x_2^2+x_3^2+x_4^2$ and $\cos\theta=\frac{(1+|q'|^2)^2-r^2}{(1+|q'|^2)^2+r^2}$.
Recall the formula
$$
\cos(k\theta)=\sum_{l=0}^{k}\left(C^{2l}_{k}\Big(\sum_{m=0}^{l}(-1)^mC^{m}_{l}\cos^{k-2m}\theta\Big)\right),
$$
where $C^s_t=\frac{t!}{s!(t-s)!}$, $s,t\in \{0,1,2,\ldots\}$ and it is vanish otherwise. 
That leads to calculations of
$$
F( {\mathbf e})=c_n4\pi\big((2n)!\big)^2\sum_{k=0}^{2n}\alpha_k\sum_{l=0}^{k}C^{2l}_{k}\sum_{m=0}^{l}(-1)^mC^{m}_{l}\int_{\mathbb H^n}dq'\int_0^{\infty}\frac{r^2\cos^{k-2m}\theta}{((1+|q'|^2)^2+r^2)^{4n+6}}dr.
$$
Substituting the value of $\cos\theta$ and using the notation $d=k-2m$, we concentrate on the calculations of the integrals of type
$$
I_{n,d}(w)=\int_{0}^{\infty}\frac{r^2}{(w^2+r^2)^{4n+6}}\Big(\frac{w^2-r^2}{w^2+r^2}\Big)^{d}dr\quad\text{with}\quad w^2=(1+|q'|^2)^2.
$$

Changing variable $\frac{r^2}{w^2}=t$, we write the integral in the form
$$
I_{n,d}(w)=\frac{(-1)^{d}}{2w^{8n+9}}\int_{0}^{\infty}t^{1/2}(1+t)^{-4n-6-d}(t-1)^d\,dt.
$$
Now we can apply the formula (see \cite{GR})
\begin{equation}\label{integral_beta}
\int_{0}^{\infty}t^{\lambda-1}(1+t)^{-\mu+\nu}(t+\beta)^{-\nu}dt=B(\mu-\lambda,\lambda)_2F_1(\nu,\mu-\lambda,\mu;1-\beta),
\end{equation}
where $\re\mu>\re\lambda>0$,
 with
$$
\beta=-1,\quad \lambda=\frac{3}{2},\quad\nu=-d,\quad\mu=4n+6,\qquad\mu\geq 10>\frac{3}{2}=\lambda>0.
$$
Then
\begin{align}\label{beta}
B(\mu-\lambda,\lambda)&=B(4n+6-3/2,3/2)=\frac{\Gamma(4n+9/2)\Gamma(3/2)}{\Gamma(4n+6)}
\\
&=\frac{(8n+8)!\,\pi}{2^{8n+9}(4n+4)!(4n+5)!},\nonumber
\end{align}
where we used
$$
B(a,b)=\frac{\Gamma(a)\Gamma(b)}{\Gamma(a+b)},\quad\Gamma(1/2)=\sqrt\pi,\quad\Gamma(n)=(n-1)!,\quad\Gamma\Big(\frac{1}{2}+n\Big)=\frac{(2n)!}{4^nn!}\sqrt\pi.
$$
For hypergeometric function
$$
_2F_1(\nu,\mu-\lambda,\mu;1-\beta)= {}_2F_1(-d,4n+9/2,4n+6;2)
$$
we apply the formula
\begin{equation*}
_2F_1(-d,a+1+b+d,a+1;x)=\frac{d!}{(a+1)_d}P^{a,b}_{d}(1-2x),
\end{equation*}
where
$$(a)_d=
\begin{cases}
1\quad & \text{if}\quad  d=0
\\
a(a+1)\ldots(a+d-1) &\text{if} \quad d>0
\end{cases}
$$
 with $a+1=4n+6$ and $b=-d-3/2$, and obtain
$$
_2F_1(-d,4n+9/2,4n+6;2)=\frac{d!}{(4n+6)_d}P_d^{4n+5,-d-3/2}(-3).
$$
To calculate the value of the Jacobi polynomial $P_d^{4n+5,-d-3/2}(-3)$, we use the formula
\begin{equation*}
P^{a,b}_{d}(x)=\sum_{s=0}^{\infty}
\Big(\begin{array}{cc}d+a \\ s\end{array}\Big)
\Big(\begin{array}{cc}d+b \\ d-s\end{array}\Big)
\Big(\frac{x-1}{2}\Big)^{d-s}
\Big(\frac{x+1}{2}\Big)^{s},
\end{equation*}
where $s\geq 0$ and $d-s\geq 0$, and for integer $s$
$$
\Big(\begin{array}{cc}z \\ s\end{array}\Big)=\frac{\Gamma(z+1)}{\Gamma(s+1)\Gamma(z-s+1)}\quad\text{with}\quad\Big(\begin{array}{cc}z \\ s\end{array}\Big)=0\quad\text{for}\quad s<0.
$$
Observe that the terms
$$
\Big(\begin{array}{cc}d+b \\ d-s\end{array}\Big)=\Big(\begin{array}{cc}-3/2 \\ d-s\end{array}\Big)
$$
vanish for $d-s<0$, that allows to conclude that the series in the Jacobi polynomial $P_d^{4n+5,-d-3/2}$ has only finite number of terms and reduces to the sum from $s=0$ to $s=d$. Now we calculate each term in the sum
\begin{equation}\label{Jacobi_final}
P_d^{4n+5,-d-3/2}(-3)=\sum_{s=0}^{s=d}\Big(\begin{array}{cc}4n+5+d \\ s\end{array}\Big)\Big(\begin{array}{cc}-3/2 \\ d-s\end{array}\Big)(-1)^d2^{d-s}.
\end{equation}
We deduce
$$
\Big(\begin{array}{cc}4n+5+d \\ s\end{array}\Big)=\frac{(4n+5+d)!}{s!(4n+5+d-s)!},
$$
$$
\Big(\begin{array}{cc}-3/2 \\ d-s\end{array}\Big)=\frac{\Gamma(-1/2)}{(d-s)!\Gamma(-1/2-d+s)}=\frac{(-1)^{d-s}\big(2(d-s+1)\big)!}{2^{2(d-s)+1}(d-s)!(d-s+1)!},
$$
where we used the formula
$$
\Gamma\Big(\frac{1}{2}-n\Big)=\frac{(-4)^nn!}{(2n)!}\sqrt\pi.
$$
Substituting all terms into~\eqref{Jacobi_final},  we get
$$
P_d^{4n+5,-d-3/2}(-3)=(4n+5+d)!\sum_{s=0}^{s=d}\frac{(-1)^s}{2^{d-s+1}}\frac{\big(2(d-s+1)\big)!}{s!(d-s)!(d-s+1)!(4n+5+d-s)!}.
$$
We finish to calculate the hypergeometric function
\begin{align}\label{hypergeom_final_final}
&_2F_1(-d,4n+9/2,4n+6;2)\nonumber
\\
&=\frac{d!(4n+5+d)!}{(4n+6)_d}\sum_{s=0}^{s=d}\frac{(-1)^s}{2^{d-s+1}}\frac{\big(2(d-s+1)\big)!}{s!(d-s)!(d-s+1)!(4n+5+d-s)!}
\\
&=
(4n+5)!\sum_{s=0}^{s=d}\frac{C_{d}^s}{2^{d-s+1}}\frac{(-1)^s\big(2(d-s+1)\big)!}{(d-s+1)!(4n+5+d-s)!}.\nonumber
\end{align}
Collecting~\eqref{hypergeom_final_final} and~\eqref{beta}, we obtain the value of the integral
\begin{equation}\label{I_final}
I_{n,d}(w)=\frac{(-1)^d\pi}{2^{8n+10}w^{8n+9}}\frac{(8n+8)!}{(4n+4)!}\sum_{s=0}^{s=d}\frac{C_{d}^s}{2^{d-s+1}}\frac{(-1)^s\big(2(d-s+1)\big)!}{(d-s+1)!(4n+5+d-s)!}.
\end{equation}
Replacing $d$ by $k-2m$ and substituting the value of $I_{n,k-2m}(w)$ into $F({\mathbf e})$, we get
\begin{align*}
F({\mathbf e})&=\frac{c_n\pi^2\big((2n)!\big)^2(8n+8)!}{2^{8n+8}(4n+4)!}\sum_{k=0}^{2n}\alpha_k\sum_{l=0}^{k}C^{2l}_{k}\sum_{m=0}^{l}(-1)^{k-m}C^{m}_{l}
\\
&\times
\sum_{s=0}^{k-2m}\frac{C_{k-2m}^s}{2^{k-2m-s+1}}\frac{(-1)^s\big(2(k-2m-s+1)\big)!}{(k-2m-s+1)!(4n+5+k-2m-s)!}\int_{\mathbb H^n}\frac{dq'}{(1+|q'|^2)^{4n+9/2}}
\end{align*}

In order to finish the calculations, we need to evaluate the integral
$$
\int_{\mathbb H^n}\frac{dq'}{(1+|q'|^2)^{4n+9/2}}=\int_{S^{4n-1}}dV^{4n-1}\int_0^\infty\frac{r^{4n-2}\,dr}{(1+r^2)^{4n+9/2}}.
$$
It is well known that the volume of the sphere $S^{4n-1}$ is
$$
\int_{S^{4n-1}}dV^{4n-1}=\frac{\pi^{2n-1/2}}{\Gamma(2n+1/2)}.
$$
Making use the substitution $t=r^2$ we obtain
$$
\int_0^\infty\frac{r^{4n-2}\,dr}{(1+r^2)^{4n+9/2}}=\frac{1}{2}\int_{0}^{\infty}\frac{t^{2n-3/2}\,dt}{(1+t)^{4n+9/2}}=\frac{\Gamma(2n-1/2)\Gamma(2n+5)}{2\Gamma(4n+9/2)}.
$$
Multiplying two latter expressions, we find
\begin{equation}\label{eq:intHn}
\int_{\mathbb H^n}\frac{dq'}{(1+|q'|^2)^{4n+9/2}}=2^{8n+8}\pi^{2n-1}\frac{(2n+4)!}{4n-1}\frac{(4n+4)!}{(8n+8)!}.
\end{equation}
Substituting the value of the integral $ \int_{\mathbb H^n}\frac{dq'}{(1+|q'|^2)^{4n+9/2}}$ to the expression for $F({\mathbf e})$ we get
$$
F({\mathbf e})=c_n\frac{\pi^{2n+1}\big((2n)!\big)^2(2n+4)!}{4n-1}K(n),
$$
where the constant $K(n)$ is given by~\eqref{K}.

Remind that from the other hand $
F({\mathbf e})=\frac{(2n+2)!}{2^{{2n+4}}}
$.
Comparing two expressions for $F({\mathbf e})$ we finish proof of the theorem.
\end{proof}

\begin{cor}
Particularly, the constant in the Cauchy - Szeg\"o kernel for low dimensions are equal to $c_1=\frac{6237}{872\pi^3}$ and $c_2=\frac{11486475}{193472\pi^5}$.
\end{cor}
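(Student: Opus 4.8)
The plan is to substitute $n=1$ and $n=2$ directly into the closed-form expression \eqref{eq:constant} for $c_n$, which reduces the corollary to a finite arithmetic evaluation. Since every ingredient of \eqref{eq:constant}---the powers $2^{2n+5}$ and $\pi^{2n+1}$, the factorial $\big((2n)!\big)^2$, the rational factor $\frac{4n-1}{(n+2)(2n+3)}$, and the constant $K(n)$ of \eqref{K}---is already given explicitly in terms of $n$, no new idea is required beyond careful bookkeeping.

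First I would tabulate the coefficients $\alpha_k=\frac{(2n+1-k)(2n+2-k)(4n+3+k)}{6}$ over the relevant range $0\le k\le 2n$. For $n=1$ this produces $\alpha_0,\alpha_1,\alpha_2$, and for $n=2$ it produces $\alpha_0,\dots,\alpha_4$. These are small integers and are immediate to list.

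Next I would evaluate the quadruple sum $K(n)$ of \eqref{K}, working from the innermost index outward: for each fixed $k$ and $l$ sum over $m$ from $0$ to $l$, and for each such triple sum the innermost index $s$ from $0$ to $k-2m$, using the binomial coefficients $C^{2l}_{k}$, $C^{m}_{l}$, $C^{s}_{k-2m}$ together with the factorial ratio $\frac{(-1)^{s}\,\big(2(k-2m-s+1)\big)!}{(k-2m-s+1)!\,(4n+5+k-2m-s)!}$ and the sign $(-1)^{k+m}$. Because the outer index runs only up to $2$ (resp.\ $4$) and the inner ranges collapse quickly, each $K(n)$ is a finite sum of explicitly computable rational numbers.

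Finally, I would assemble $c_n=\frac{1}{2^{2n+5}\pi^{2n+1}\big((2n)!\big)^2 K(n)}\cdot\frac{4n-1}{(n+2)(2n+3)}$ for $n=1,2$ and reduce the resulting fraction to lowest terms, which should yield $c_1=\frac{6237}{872\pi^3}$ and $c_2=\frac{11486475}{193472\pi^5}$. The only genuine obstacle is the bookkeeping: keeping the four nested indices, the alternating signs $(-1)^{k+m}$ and $(-1)^{s}$, and the many factorials straight, and then cancelling the large common factors (for instance $\big((2n)!\big)^2$ against the sizeable numerators produced by $K(n)$) so that the reduction to the stated fractions is transparent. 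For $n=2$ the number of terms is large enough that the computation is most safely organized as a table or cross-checked by a symbolic computation.
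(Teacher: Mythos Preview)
Your approach is correct and is the natural one once formula \eqref{eq:constant} has been established: specialize the general expression for $c_n$ to $n=1,2$, evaluate the finite sum $K(n)$ of \eqref{K}, and simplify. Nothing beyond arithmetic is needed.

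The paper, however, does not do this. Its proof of the corollary bypasses \eqref{K} entirely and goes back to the integral identity
\[
F(\mathbf e)=c_n\,4\pi\big((2n)!\big)^2\sum_{k=0}^{2n}\alpha_k\int_{\mathbb H^{n-1}}dq'\int_0^\infty\frac{r^2\cos(k\theta)}{((1+|q'|^2)^2+r^2)^{4n+6}}\,dr,
\]
expands each $\cos(k\theta)$ as a polynomial in $\cos\theta$, and evaluates the resulting one-variable integrals via the Beta function and the values ${}_2F_1(-d,4n+9/2,4n+6;2)$ at small integer $d$. Comparing with $F(\mathbf e)=(2n+2)!/2^{2n+4}$ then yields $c_1$ and $c_2$ directly. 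In effect the paper reruns the derivation of the general constant in the two low-dimensional cases rather than instantiating the finished formula. Your route is shorter and more mechanical; the paper's route has the virtue of serving as an independent check on \eqref{eq:constant} and \eqref{K}, since any slip in the general quadruple sum would be detected by the discrepancy.
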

\begin{proof}
{\sc Case $n=1$.} From one hand $F({\mathbf e})=\frac{3}{8}$. From the other hand we have to calculate the integral
$$
F({\mathbf e})=c_14\pi(2!)^2\left(\int_{\mathbb H^1}\frac{dq'}{(1+|q'|^2)^{4+9/2}}\right)\left(\frac{1}{2}\int_0^{\infty}\frac{t^{1/2}}{(1+t)^{10}}\sum_{k=0}^{2}\alpha_k\cos(k\theta)\,dt\right).
$$
We know that
$$
\int_{\mathbb H^1}\frac{dq'}{(1+|q'|^2)^{4+9/2}}=\frac{2^{16}\pi6!8!}{3\cdot 16!}
$$
by~\eqref{eq:intHn}, and
$$
\sum_{k=0}^{2}\alpha_k\cos(k\theta)=14+8\cos\theta+3\cos(2\theta)=11+8\cos\theta+6\cos^2\theta.
$$
by~\eqref{alpha_k}. Then
\begin{align*}
&\frac{1}{2}\int_0^{\infty}\frac{t^{1/2}}{(1+t)^{10}}\sum_{k=0}^{2}\alpha_k\cos(k\theta)\,dt
\\ & =
\frac{B\left(\frac{17}2,\frac 3 2\right)}{2}\Big(11\, _2F_1(0,17/2,10;2)-8\, _2F_1(-1,17/2,10;2)+6\, _2F_1(-2,17/2,10;2)\Big)
\\
&=\frac{\pi16!}{2^{18}8!9!}(11+8\frac{7}{10}+6\frac{59}{110})
=\frac{\pi16!}{2^{18}8!9!}\frac{218}{11}.
\end{align*}
It gives
$$
F({\mathbf e})=c_14\pi2^2\frac{2^{16}\pi6!8!}{3\cdot 16!}\frac{\pi16!}{2^{18}8!9!}\frac{218}{11}=c_1\pi^3\frac{109}{2079}
$$
and,  finally, $c_1=\frac{6237}{872\pi^3}$

{\sc Case $n=2$.} Again, from one hand we get $F({\mathbf e})=\frac{45}{16}$. From the other hand we get
$$
\int_{\mathbb H^2}\frac{dq'}{(1+|q'|^2)^{8+9/2}}=\frac{2^{24}\pi^38!12!}{7\cdot 24!},
$$
$$
\alpha_0=55,\quad\alpha_1=40,\quad\alpha_2=26,\quad\alpha_3=14,\quad\alpha_4=5,
$$
$$
\cos(2\theta)=2\cos^2\theta-1,\,\cos(3\theta)=4\cos^3\theta-3\cos\theta,\,\cos(4\theta)=8\cos^4\theta-8\cos^2\theta+1,
$$
$$
\sum_{k=0}^{2}\alpha_k\cos(k\theta)=34-2\cos\theta+12\cos^2\theta+56\cos^3\theta+40\cos^4\theta,
$$
\begin{align*}
&\frac{1}{2}\int_0^{\infty}\frac{t^{1/2}}{(1+t)^{14}}\sum_{k=0}^{4}\alpha_k\cos(k\theta)\,dt
\\ & =
\frac{B(25/2,3/2)}{2}\Big(34\, _2F_1(0,25/2,14;2)+2\, _2F_1(-1,25/2,114;2)\\
&\qquad \qquad+12\, _2F_1(-2,25/2,14;2)
-56\, _2F_1(-3,25/2,14;2)+40\, _2F_1(-4,25/2,14;2)\Big)
\\
&=\frac{\pi24!}{2^{26}12!13!}(34-2\frac{11}{14}+12\frac{9}{14}+56\frac{121}{224}+40\frac{1763}{3808})
=\frac{\pi24!}{2^{26}12!13!}\frac{3023}{34}.
\end{align*}
Thus
$$
F({\mathbf e})=c_24\pi(4!)^2\frac{2^{24}\pi^38!12!}{7\cdot 24!}\frac{\pi24!}{2^{26}12!13!}\frac{3023}{34}=c_2\pi^5\frac{12092}{255255},
$$
that leads to $c_2=\frac{11486475}{193472\pi^5}$ and the proof of the corollary is therefore complete.
\end{proof}

\section{Acknowledgment}
The paper was initiated when the authors visited the National Center for
Theoretical Sciences, Hsinchu, Taiwan during
June,  2011. They would like to express their profound gratitude
to the Director of NCTS, Professors Winnie Li for her invitation and
for the warm hospitality extended to them during their stay in
Taiwan.

\end{document}